\documentclass[11pt,reqno]{amsart}

\setlength{\textwidth}{6.3in} \setlength{\textheight}{9.25in}
\setlength{\evensidemargin}{0in} \setlength{\oddsidemargin}{0in}
\setlength{\topmargin}{-.3in}

\usepackage{graphicx}

\usepackage{xspace}
\usepackage{amsmath,amsthm,amsfonts,amssymb,latexsym,mathrsfs,color,extarrows}
\usepackage{hyperref}

\newtheorem{theorem}{Theorem}
\newtheorem{corollary}[theorem]{Corollary}
\newtheorem{proposition}[theorem]{Proposition}

\newtheorem{lemma}[theorem]{Lemma}
\newtheorem{definition}[theorem]{Definition}
\newtheorem{example}[theorem]{Example}

\newcommand{\waexc}{{\rm waexc\,}}
\newcommand{\ston}{{\rm st\,}}
\newcommand{\ST}{{\rm ST\,}}

\newcommand{\gen}{{\rm Gen\,}}

\newcommand{\basc}{{\rm basc\,}}

\newcommand{\st}{{\rm st\,}}

\newcommand{\cda}{{\rm cda\,}}
\newcommand{\CDD}{{\rm CDD\,}}

\newcommand{\suc}{{\rm suc\,}}

\newcommand{\lpk}{{\rm lpk\,}}

\newcommand{\des}{{\rm des\,}}

\newcommand{\exc}{{\rm exc\,}}

\newcommand{\aexc}{{\rm aexc\,}}
\newcommand{\we}{{\rm wexc\,}}

\newcommand{\cyc}{{\rm cyc\,}}

\newcommand{\fix}{{\rm fix\,}}

\newcommand{\NS}{\mathcal{NS}}
\newcommand{\mc}{{\mathcal C}}

\newcommand{\mdn}{\mathcal{D}}

\newcommand{\D}{\mathfrak{D}}

\newcommand{\msn}{\mathfrak{S}_n}
\newcommand{\rss}{\mathcal{SS}}
\newcommand{\ms}{\mathfrak{S}}

\newcommand{\rs}{\mathcal{RS}}

\newcommand{\lrf}[1]{\lfloor #1\rfloor}

\newcommand{\z}{ \mathbb{Z}}
\newcommand{\asc}{{\rm asc\,}}

\newcommand{\arxiv}[1]{\href{http://arxiv.org/abs/#1}{\texttt{arXiv:#1}}}
\linespread{1.25}

\title{ The $\gamma$-positivity of Eulerian polynomials and succession statistics}
\author[S.-M.~Ma]{Shi-Mei Ma}
\address{School of Mathematics and Statistics,
        Northeastern University at Qinhuangdao,
         Hebei 066000, P.R. China}
\email{shimeimapapers@163.com (S.-M. Ma)}
\author{Jun Ma}
\address{Department of Mathematics, Shanghai Jiao Tong University, Shanghai, P.R. China}
\email{majun904@sjtu.edu.cn}
\author{Jean Yeh}
\address{Departemnt of Mathematics, National Kaohsiung Normal University, Kaohsiung 82446, Taiwan}
\email{chunchenyeh@nknu.edu.tw}
\author{Yeong-Nan Yeh}
\address{Institute of Mathematics, Academia Sinica, Taipei, Taiwan}
\email{mayeh@math.sinica.edu.tw}
\subjclass[2010]{Primary 05A05; Secondary 26C05}
\begin{document}

\maketitle
\begin{abstract}
This paper is concerned with multivariate refinements of the $\gamma$-positivity of Eulerian polynomials by using the succession and fixed point statistics.
Properties of the enumerative polynomials for permutations, signed permutations and derangements, including generating functions and $\gamma$-positivity are
studied, which generalize and unify earlier results of Athanasiadis, Brenti, Chow, Petersen, Roselle, Stembridge, Shin and Zeng.
In particular, we derive a formula expressing the joint distribution of excedance number
and negative number statistics over the type $B$ derangements in terms of the derangement polynomials.
\bigskip

\noindent{\sl Keywords}: Eulerian polynomials; Derangement polynomials; Gamma-positivity; Successions
\end{abstract}
\date{\today}
\section{Introduction}
Let $f(x)=\sum_{i=0}^nf_ix^i$ be a symmetric polynomial of degree $n$, i.e., $f_i=f_{n-i}$ for any $0\leq i\leq n$. Then $f(x)$ can be expanded uniquely as
$$f(x)=\sum_{k=0}^{\lrf{{n}/{2}}}\gamma_kx^k(1+x)^{n-2k}.$$
We say that $f(x)$ is {\it $\gamma$-positive}
if $\gamma_k\geq 0$ for $0\leq k\leq \lrf{{n}/{2}}$ (see~\cite{Branden08,Gal05,Lin15} for instance).
The $\gamma$-positivity of $f(x)$ implies symmetry and unimodality of $f(x)$.
We refer the reader to Athanasiadis's survey article~\cite{Athanasiadis17} for details.
This paper is concerned with refinements of the $\gamma$-positivity of Eulerian polynomials.
It is often useful to consider multivariate refinements of enumerative polynomials.
In many instances with the help of such refinements more connections among various statistics can be discovered.

Let $[n]=\{1,2,\ldots,n\}$.
Let $\msn$ denote the symmetric group of all permutations of $[n]$ and let $\pi=\pi(1)\pi(2)\cdots\pi(n)\in\msn$.
A {\it descent} (resp.~{\it ascent, excedance}) of $\pi$ is an index $i\in[n-1]$
such that $\pi(i)>\pi(i+1)$ (resp.~$\pi(i)<\pi(i+1)$, $\pi(i)>i$). Let $\des(\pi)$ (resp.~$\asc(\pi)$, $\exc(\pi)$) denote the number of descents (resp.~ascents, excedances) of $\pi$.
It is well known that descents, ascents and excedances are equidistributed over the symmetric group,
and their common enumerative polynomials are the {\it Eulerian polynomial} $A_n(x)$, i.e.,
$$A_n(x)=\sum_{\pi\in\msn}x^{\des(\pi)}=\sum_{\pi\in\msn}x^{\asc(\pi)}=\sum_{\pi\in\msn}x^{\exc(\pi)}.$$
The exponential generating function of the polynomials $A_{n}(x)$ is given as follows:
\begin{equation}\label{Ankx-deff}
A(x;z)=\sum_{n=0}^\infty A_{n}(x)\frac{z^n}{n!}=\frac{x-1}{x-e^{(x-1)z}}.
\end{equation}

An index $i$ is called a {\it double descent} of $\pi$ if $\pi(i-1)>\pi(i)>\pi(i+1)$, where $\pi(0)=\pi(n+1)=0$.
Foata and Sch\"utzenberger~\cite{Foata70} obtained that
\begin{equation}\label{Anx-gamma}
A_n(x)=\sum_{j=0}^{(n-1)/2}\gamma_{n,j}x^i(1+x)^{n-1-2i},
\end{equation}
where $\gamma_{n,j}$ counts permutations $\pi\in\msn$ which have no double descents and $\des(\pi)=j$.
This statement, along several multivariate refinements and generalizations, were frequently discovered in the past decades, see~\cite{Athanasiadis17,Ma19,Zhuang17} and references therein.
For example, by using the theory of enriched $P$-partitions, Stembridge~\cite[Remark 4.8]{Stembridge97} showed that
\begin{equation}\label{AnxWni}
A_n(x)=\frac{1}{2^{n-1}}\sum_{i=0}^{(n-1)/2}4^iW(n,i)x^i(1+x)^{n-1-2i},
\end{equation}
where $W(n,i)$ is the the number of permutations in $\msn$ with $i$ {\it interior peaks}, i.e., the indices $i\in\{2,\ldots,n-1\}$ such that $\pi(i-1)<\pi(i)>\pi(i+1)$.
By using modified Foata-Strehl action~\cite{Branden08}, we see that the expansion~\eqref{AnxWni} is equivalent to~\eqref{Anx-gamma}.

We say that $\pi\in\msn$ has no {\it proper double descents} if there is no
index $i\in [n-2]$ such that $\pi(i)>\pi(i+1)>\pi(i+2)$.
The permutation $\pi$ is called {\it simsun} if for all $k$, the
subword of $\pi$ restricted to $[k]$ (in the order
they appear in $\pi$) contains no proper double descents (see~\cite[p.~267]{Sundaram1994}).
Let $\rs_n$ be the set of simsun permutations of length $n$.
The descent polynomials of simsun permutations are defined by $$S_n(x)=\sum_{\pi\in\rs_n}x^{\des(\pi)}=\sum_{i=0}^{\lrf{n/2}}S(n,i)x^i.$$
It should be noted that the polynomial $S_n(x)$ equals the descent polynomial of And\'re permutations of the second kind of order $n+1$, see~\cite{Chow11,Foata01} for details.

\begin{definition}
A value $x=\pi(i)$ is called a {\it cycle double ascent} of $\pi$ if
$i=\pi^{-1}(x)<x<\pi(x)$. Let $\cda(\pi)$ be the number of {\it cycle double ascents} of $\pi$.
 We say that $\pi\in\msn$ is a {\it simsun permutation of the second kind} if for all $k\in [n]$,
after removing the $k$ largest letters of $\pi$, the resulting permutation has no cycle double ascents.
\end{definition}
For example, $(1,6,5,3,4)(2)$ is not a simsun permutation of the second kind since when we remove the letters 5 and 6,
the resulting permutation $(1,3,4)(2)$ contains the cycle double ascent 3.
Let $\rss_n$ be the set of the simsun permutations of the second kind of length $n$.
It follows from~\cite[Eq.~(18)]{Ma17} that
$$\#\{\pi\in\rs_n: \des(\pi)=i\}=\#\{\pi\in\rss_n: \exc(\pi)=i\}.$$
A constructive proof of the following identity was given in~\cite[Proposition~1]{Ma17}:
\begin{equation}\label{WnkSnk}
W(n+1,i)=2^{n-i}S(n,i).
\end{equation}
Combining~\eqref{AnxWni} and~\eqref{WnkSnk}, we get the following well known result.
\begin{proposition}[{\cite{Athanasiadis17,Foata73,Foata01}}]\label{Foata}
For $n\geq 1$, we have
\begin{equation}\label{An1x}
A_{n+1}(x)=\sum_{i=0}^{\lrf{n/2}}S(n,i)(2x)^i(x+1)^{n-2i}.
\end{equation}
\end{proposition}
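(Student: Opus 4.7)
The plan is to derive the identity by direct algebraic substitution, using the two formulas already recalled in the excerpt. Everything needed is present: Stembridge's $\gamma$-expansion \eqref{AnxWni} which expresses $A_n(x)$ in terms of interior-peak counts $W(n,i)$, and the bijective identity \eqref{WnkSnk} which relates $W(n+1,i)$ to the simsun descent numbers $S(n,i)$.

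First I will replace $n$ by $n+1$ in \eqref{AnxWni}, obtaining
\[
A_{n+1}(x)=\frac{1}{2^{n}}\sum_{i=0}^{\lrf{n/2}}4^{i}W(n+1,i)\,x^{i}(1+x)^{n-2i}.
\]
Here I should briefly note that the upper summation limit becomes $\lrf{n/2}$: indeed, $W(n+1,i)$ vanishes once $i$ exceeds the number of admissible interior peak positions, and $\lrf{n/2}$ is the correct cut-off whether $n$ is even or odd, matching the bound in the target identity.

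Next I will substitute the evaluation $W(n+1,i)=2^{n-i}S(n,i)$ from \eqref{WnkSnk}. The prefactor becomes
\[
\frac{1}{2^{n}}\cdot 4^{i}\cdot 2^{n-i}=2^{i},
\]
so the expression collapses to
\[
A_{n+1}(x)=\sum_{i=0}^{\lrf{n/2}}2^{i}S(n,i)\,x^{i}(1+x)^{n-2i}=\sum_{i=0}^{\lrf{n/2}}S(n,i)(2x)^{i}(x+1)^{n-2i},
\]
which is exactly \eqref{An1x}.

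Because both inputs are stated in the excerpt and may be quoted directly, there is essentially no technical obstacle. The only point deserving a line of justification is the matching of the index ranges and the bookkeeping of the powers of $2$ and $4$; these are routine, so the proof reduces to the two-step substitution above.
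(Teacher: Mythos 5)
Your proposal is correct and is precisely the argument the paper intends: the proposition is obtained by combining Stembridge's expansion \eqref{AnxWni} (with $n$ replaced by $n+1$) with the identity $W(n+1,i)=2^{n-i}S(n,i)$ from \eqref{WnkSnk}, and your bookkeeping of the factors $4^i2^{n-i}/2^n=2^i$ and of the summation range is accurate.
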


A {\it fixed point} of $\pi$ is an index $k\in [n]$ such that $\pi(k)=k$.
Let $\fix(\pi)$ denote the number of fixed points of $\pi$.
A permutation $\pi\in\msn$ is a {\it derangement} if it has no fixed points, i.e., $\pi(i)\neq i$ for all $i\in [n]$.
Let $\mdn_n$ be the set of derangements in $\msn$.
The {\it derangement polynomials} are defined by $$d_n(x)=\sum_{\pi\in\mdn_n}x^{\exc(\pi)}.$$
The generating function of $d_n(x)$ is given as follows (see~\cite[Proposition~6]{Brenti90}):
\begin{equation}\label{dxz-EGF}
d(x,z)=\sum_{n=0}^\infty d_n(x)\frac{z^n}{n!}=\frac{1-x}{e^{xz}-xe^{z}}.
\end{equation}
Let $\cyc(\pi)$ be the number of {\it cycles} of $\pi$, and let $\mdn_{n,k}=\{\pi\in\mdn_n: \cda(\pi)=0,~\exc(\pi)=k\}$.
Shin and Zeng~\cite[Theorem~11]{Zeng12} proved the following result by using continued fractions.
\begin{proposition}\label{Zeng}
For $n\geq 2$, we have
$$\sum_{\pi\in\mdn_n}x^{\exc(\pi)}q^{\cyc(\pi)}=\sum_{k=1}^{\lrf{n/2}}\left(\sum_{\pi\in \mdn_{n,k}}q^{\cyc(\pi)}\right)x^{k}(1+x)^{n-2k}.$$
\end{proposition}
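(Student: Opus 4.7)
The plan is to prove the identity via a cycle analogue of the modified Foata--Strehl action, in the spirit of Br\"anden's valley-hopping construction. For a derangement $\pi\in\mdn_n$ and a value $a\in[n]$, classify $a$ by comparing the triple $(\pi^{-1}(a),a,\pi(a))$: $a$ is a \emph{cycle valley}, \emph{cycle peak}, \emph{cycle double ascent}, or \emph{cycle double descent} in the four respective cases. Since $\pi$ has no fixed points, every value of $[n]$ belongs to exactly one of these classes; the cycle peaks and cycle valleys of $\pi$ are equinumerous (write this common count $p(\pi)$); and $\exc(\pi)=p(\pi)+\cda(\pi)$.

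The crux is to construct, for each $a\in[n]$, an involution $\varphi_a:\mdn_n\to\mdn_n$ that fixes $\pi$ when $a$ is a cycle peak or valley, and otherwise toggles $a$ between cycle double ascent and cycle double descent while leaving $\cyc(\pi)$, every cycle peak, every cycle valley, and the class of every other value unchanged. Explicitly, given the cycle $(\ldots u\, a\, v\ldots)$ containing $a$, one extracts $a$ and reinserts it at the unique edge of the same cycle whose local pattern is reversed, mimicking Br\"anden's valley-hopping. One then verifies that each $\varphi_a$ is an involution, that distinct $\varphi_a$'s commute, and that a non-trivial application of $\varphi_a$ changes $\exc(\pi)$ by exactly $\pm 1$ while preserving $\cyc(\pi)$.

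Granting this construction, the group $(\z/2\z)^{\,n-2p(\pi)}$ generated by the non-trivial $\varphi_a$'s acts on $\mdn_n$, and the orbit of $\pi$ consists of all derangements sharing its cyclic peak-valley skeleton: each of the $n-2p(\pi)$ non-extremal values may be independently designated a cycle double ascent or a cycle double descent. Hence
$$\sum_{\sigma\in\Orb(\pi)}x^{\exc(\sigma)}q^{\cyc(\sigma)}=q^{\cyc(\pi)}\,x^{p(\pi)}(1+x)^{n-2p(\pi)}.$$
Each orbit contains a unique representative $\pi_0$ with $\cda(\pi_0)=0$; such a $\pi_0$ satisfies $\exc(\pi_0)=p(\pi_0)=:k$, so the orbit representatives are exactly the elements of $\bigcup_{k=1}^{\lrf{n/2}}\mdn_{n,k}$. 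Summing the displayed identity over all orbit representatives yields the proposition.

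The principal technical obstacle is the explicit definition of $\varphi_a$: one must simultaneously verify that it is a well-defined involution, that distinct $\varphi_a$'s commute, and that the classes of all values other than $a$ are preserved. Short cycles (especially 2- and 3-cycles) and ``self-interaction'' when $a$'s cyclic neighbors are themselves flippable require careful bookkeeping. An alternative route, followed originally by Shin and Zeng~\cite{Zeng12}, bypasses the bijection entirely: expanding the refined generating function of $\sum_{\pi\in\mdn_n}x^{\exc \pi}q^{\cyc \pi}$ as a J-type continued fraction and invoking the Flajolet--Viennot interpretation of its coefficients as weighted Motzkin paths yields both the $\gamma$-positivity and the combinatorial meaning of the $\gamma$-coefficients directly.
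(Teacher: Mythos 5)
Your proposal is essentially correct, and it takes a genuinely different route from the paper. The paper never proves Proposition~\ref{Zeng} directly: it quotes Shin and Zeng's continued-fraction proof~\cite{Zeng12}, and then recovers the statement as the specialization $s=0$, $y=1$, $q=0$ of the six-variable expansion~\eqref{Bnxyspq-gamma}, which is established by a context-free grammar (a grammatical labeling of cycles, a change of grammar, and an insertion recurrence for $b_{n,i,j}(p)$ in which the action $\varphi'_x$ is used only locally, to interpret a single term of the recurrence when inserting $n+1$). You instead run the cyclic valley-hopping action globally: orbits contribute $q^{\cyc(\pi)}x^{p(\pi)}(1+x)^{n-2p(\pi)}$, and the unique $\cda$-free orbit representative has $\exc=p=k\in\{1,\ldots,\lrf{n/2}\}$, which sums to the identity. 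Your bookkeeping is right: in a derangement every value is classified, cycle peaks and valleys are equinumerous, $\exc(\pi)=p(\pi)+\cda(\pi)$, and $\cyc$ is invariant under hops since each value is reinserted into its own cycle. What your route buys is a self-contained bijective proof of this one statement (and it visibly refines to track $\fix$ as well, giving the interpretation~\eqref{bnij-combin}); what the paper's route buys is the full multivariate Theorem~\ref{mainthm02}, of which this proposition is one corner. The only caveat is that you defer exactly the technical heart --- well-definedness of $\varphi_a$ (the landing slot exists in the same cycle because the straddling adjacencies of $a$ alternate between ascents and descents around the cycle), the involution property, preservation of the classification of all other values (removal and reinsertion of $a$ only changes neighbors of letters on one side of $a$), and the fact that the designations of the $n-2p$ non-extremal values can be toggled independently. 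These are true and standard (cf.~\cite{Branden08}), and the action you need is literally the map $\varphi'_x$ the paper defines in Section~\ref{Section03}; note also that full commutativity is not strictly needed, since it suffices to show each skeleton together with an arbitrary cda/cdd designation is realized by exactly one derangement. As written, though, your argument is an outline at that crucial point rather than a proof, so those verifications would have to be supplied before the proposal is complete.
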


The fixed point is closely related to the succession statistic.
A {\it succession} of $\pi\in\msn$ is an index $k\in [n-1]$ such that $\pi(k+1)=\pi(k)+1$. Let $\suc(\pi)$ denote the number of successions of $\pi$.
Diaconis, Evans and Graham~\cite{Diaconis14} call the pair $(k,k+1)$ an {\it unseparated pair} of a permutation $\pi$ if $k$ is a succession, and
gave three different proof of the following result.
\begin{proposition}
For all $I\subseteq [n-1]$, we have
\begin{align*}
&\#\{\pi\in\msn: \{k\in [n-1]: \pi(k+1)=\pi(k)+1\}= I\}\\
&=\#\{\pi\in\msn: \{k\in [n-1]: \pi(k)=k\}= I\}.
\end{align*}
\end{proposition}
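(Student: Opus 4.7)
\bigskip

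\noindent\textbf{Proof proposal.} The plan is to prove the identity by Möbius inversion on the Boolean lattice $2^{[n-1]}$, after showing that the two ``superset'' counts agree and depend only on $|I|$. For $I\subseteq [n-1]$, set
\[
\mathcal A_I=\{\pi\in\msn:\pi(k+1)=\pi(k)+1\text{ for all }k\in I\},\qquad
\mathcal B_I=\{\pi\in\msn:\pi(k)=k\text{ for all }k\in I\}.
\]
The heart of the proof is the claim that $|\mathcal A_I|=|\mathcal B_I|=(n-|I|)!$ for every $I$.

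The count $|\mathcal B_I|=(n-|I|)!$ is immediate: prescribing $\pi(k)=k$ on $I$ leaves a free permutation of $[n]\setminus I$. To count $\mathcal A_I$, I would introduce the equivalence relation on $[n]$ generated by $i\sim i+1$ whenever $i\in I$; its classes are maximal intervals $C_1<C_2<\cdots<C_k$ of sizes $s_1,\dots,s_k$ with $k=n-|I|$ and $\sum_j s_j=n$. A permutation lies in $\mathcal A_I$ exactly when, on each class $C_j$, the values $\pi(C_j)$ form a block of $s_j$ consecutive integers listed in increasing order. Choosing such a $\pi$ therefore amounts to partitioning $[n]$ into value-intervals of prescribed sizes $s_1,\dots,s_k$ placed in some order on the number line, and then matching them bijectively to the position-classes $C_1,\dots,C_k$ by size. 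This is equivalent to choosing a total ordering of the $k$ position-classes along the value axis, giving exactly $k!=(n-|I|)!$ permutations.

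Once $|\mathcal A_I|=|\mathcal B_I|$ is established, the final step is standard Möbius inversion. Writing $a_I$ and $b_I$ for the numbers of permutations whose succession set, respectively fixed-point set, equals $I$, one has
\[
|\mathcal A_I|=\sum_{J\supseteq I}a_J,\qquad |\mathcal B_I|=\sum_{J\supseteq I}b_J,
\]
and inversion on the Boolean lattice gives
\[
a_I=\sum_{J\supseteq I}(-1)^{|J|-|I|}|\mathcal A_J|,\qquad b_I=\sum_{J\supseteq I}(-1)^{|J|-|I|}|\mathcal B_J|,
\]
so $|\mathcal A_J|=|\mathcal B_J|$ for all $J$ forces $a_I=b_I$.

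The only real obstacle is the count for $\mathcal A_I$: one must resist the temptation to overcount by distinguishing among value-blocks of equal size. The cleanest bookkeeping is the one above, where the $k!$ arises as the number of ways of ordering the (already distinguished) position-classes along the value axis; alternatively one may argue by induction on $|I|$, peeling off one element of $I$ at a time and observing that each such constraint reduces the count by exactly one among the remaining position-classes, trading an $(n{-}r{+}1)!$ for an $(n{-}r)!$.
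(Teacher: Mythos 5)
Your argument is correct and complete. Note first that the paper itself does not prove this proposition: it is quoted from Diaconis, Evans and Graham, so there is no in-text proof to compare against; your write-up therefore supplies a proof where the paper only cites one. The two key counts are both right: $|\mathcal B_I|=(n-|I|)!$ is immediate, and your bookkeeping for $|\mathcal A_I|$ is sound --- a permutation with successions at every $k\in I$ is exactly a choice of a linear order of the $k=n-|I|$ position-classes along the value axis, since once that order is fixed the (consecutive, increasing) value intervals assigned to the classes are forced, and the order is recovered from the permutation; this gives $k!$ and dodges the equal-size-blocks overcounting issue you flag. The passage from the ``superset'' counts to the ``exact'' counts by M\"obius inversion on the Boolean lattice is standard and correctly applied, since a permutation whose succession (resp.\ fixed-point) set is exactly $J$ lies in $\mathcal A_I$ (resp.\ $\mathcal B_I$) precisely when $J\supseteq I$. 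The one thing your route does not deliver, and which a bijective proof (as in one of the arguments of Diaconis--Evans--Graham) would, is an explicit set-preserving bijection between the two families; for the stated equality of cardinalities, however, your inclusion--exclusion argument is entirely sufficient.
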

Subsequently, Brenti and Marietti~\cite{Brenti18} studied the fixed point and succession statistics of colored permutations in the complex reflection group.
In this paper, we shall establish some connections between successions and fixed points by using some multivariate polynomials.

Let $$P(n,r,s)=\#\{\pi\in\msn: \asc(\pi)=r,~\suc(\pi)=s\}.$$
Roselle~\cite[Eq.~(2.1)]{Roselle68} proved that
$$P(n,r,s)=\binom{n-1}{s}P(n-s,r-s,0).$$
Let $P^*(n,r)$ be the number of permutations of $\msn$ with $r$ ascents, no successions and $\pi(1)>1$.
Let $P^*_n(x)=\sum_{k=0}^{n-1}P^*(n,r)x^r$.
According to~\cite[Eq.~(4.3)]{Roselle68}, we have
\begin{equation}\label{EGF1}
\sum_{n=0}^{\infty}P^*_n(x)\frac{z^n}{n!}=e^{-xz}\left(1+x\sum_{n=1}^{\infty}A_n(x)\frac{z^n}{n!}\right)=\frac{1-x}{e^{xz}-xe^{z}}.
\end{equation}
Comparing~\eqref{dxz-EGF} with~\eqref{EGF1}, we see that $P^*_n(x)=d_n(x)$.
Let $P_n(x)=\sum_{r=0}^{n-1}P(n,r)x^r$, where $P(n,r)=P(n,r,0)$. Roselle~\cite[Eq.~(3.8)]{Roselle68} showed that
\begin{equation}\label{Pnx-Pnx}
xP_n(x)=P^*_n(x)+xP^*_{n-1}(x).
\end{equation}
Motivated by~\eqref{Pnx-Pnx}, it is natural to refine the formula~\eqref{An1x} by using
the succession statistic.

In the next section, we first count permutations in $\msn$ by the numbers of big ascents, descents and successions.
We then consider the joint distribution of sextuple statistics on $B_n$.
In particular, in the proof Theorem~\ref{thm03}, we find the following result.
\begin{theorem}
For $1\leq i\leq n$, let $\widetilde{\mdn}_{n,i}^B$ be the set of type $B$ derangements of order $n$ with the restriction that
the set of negative entries of these derangements is $\{\overline{n},\overline{n-1},\ldots,\overline{n-i+1}\}$.
Let $d_{n,i}^B(x)$ be the excedance polynomials over $\widetilde{\mdn}_{n,i}^B$. Let $d_{n,0}^B(x)=d_n(x)$.
For any $1\leq i\leq n$, we have
$$d_{n,i}^B(x)=d_{n,i-1}^B(x)+d_{n-1,i-1}^B(x).$$
\end{theorem}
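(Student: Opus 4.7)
The plan is to prove the recurrence bijectively. Implicit in the statement is that the type $B$ excedance used here satisfies $\exc(\pi)=\#\{k\in[n]:|\pi(k)|>k\}$ (the natural choice making $d_{n,0}^B(x)=d_n(x)$ agree with the classical $d_n(x)$), so the excedance status at a position $k$ depends only on $k$ and $|\pi(k)|$. For $\pi\in\widetilde{\mdn}_{n,i}^B$ the signed letter $\overline{n-i+1}$ appears exactly once; let $k=\pi^{-1}(\overline{n-i+1})$. I partition $\widetilde{\mdn}_{n,i}^B$ according to whether $k\ne n-i+1$ or $k=n-i+1$ and exhibit excedance-preserving bijections onto $\widetilde{\mdn}_{n,i-1}^B$ and $\widetilde{\mdn}_{n-1,i-1}^B$ respectively.

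For $k\ne n-i+1$, define $\Phi(\pi)$ by replacing the entry $\overline{n-i+1}$ at position $k$ with $n-i+1$ and leaving every other entry unchanged. Since $k\ne n-i+1$, the permutation $\Phi(\pi)$ is still a derangement, and its negative entries are exactly $\{\overline{n},\ldots,\overline{n-i+2}\}$, so $\Phi(\pi)\in\widetilde{\mdn}_{n,i-1}^B$. Position $k$ has the same absolute value before and after, so the excedance count is preserved. The inverse takes $\sigma\in\widetilde{\mdn}_{n,i-1}^B$ and negates its unique entry equal to $n-i+1$; by the derangement hypothesis, that entry cannot sit at position $n-i+1$, so the image lies in the correct part of $\widetilde{\mdn}_{n,i}^B$.

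For $k=n-i+1$, define $\Psi(\pi)$ by deleting the entry at position $n-i+1$ and then relabeling positions and values via $j\mapsto j-1$ and $v\mapsto v-\sgn(v)$ whenever $j>n-i+1$ or $|v|>n-i+1$. The result is a signed permutation of $[n-1]$ with negative set $\{\overline{n-1},\ldots,\overline{n-i+1}\}$, still a derangement, so $\Psi(\pi)\in\widetilde{\mdn}_{n-1,i-1}^B$. The deleted position $n-i+1$ satisfies $|\overline{n-i+1}|=n-i+1$ and thus contributes $0$ to the excedance count; at every surviving position a short case analysis on whether $j$ and $|\pi(j)|$ lie above or below $n-i+1$ shows that the inequality $|\pi(j)|>j$ is preserved by the shift, so the overall excedance count is preserved. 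The inverse lifts the labels back up and reinserts $\overline{n-i+1}$ at position $n-i+1$.

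Summing $x^{\exc(\pi)}$ over the two classes then gives $d_{n,i}^B(x)=d_{n,i-1}^B(x)+d_{n-1,i-1}^B(x)$. The only non-routine step is the case-by-case verification that the label shift inside $\Psi$ leaves the excedance statistic untouched; that reduces to four easy sub-cases and constitutes the main (mild) obstacle, while the rest is bookkeeping about how negative sets and derangement-ness behave under the two maps.
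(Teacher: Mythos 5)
There is a genuine gap, and it occurs at the very first step: the excedance statistic you work with is not the one in the statement. For $\sigma\in B_n$ the paper defines $i$ to be an excedance when $\sigma(|\sigma(i)|)>\sigma(i)$ (Brenti's definition; this is the statistic that enters $d_n^B(x,q)$, the generating function~\eqref{Dnbxq-EGF}, and the rest of Theorem~\ref{thm03}). Your assumption $\exc(\sigma)=\#\{k\in[n]:|\sigma(k)|>k\}$ is a different statistic: for $\sigma=\overline{2}\,\overline{3}\,\overline{1}\in B_3$ (one-line notation) the paper's statistic gives $1$ (only $i=2$ qualifies, since $\sigma(3)=\overline{1}>\overline{3}$), while yours gives $2$. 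The fact that both reduce to the classical excedance when there are no negative entries does not single out your choice, so the definition is not ``implicit''; the paper's is explicit and different.

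Because of this, your map $\Phi$ does not do what you claim for the actual statistic. Take $\sigma=\overline{2}\,\overline{3}\,1\in\widetilde{\mdn}_{3,2}^B$; here $\exc(\sigma)=1$, but $\Phi(\sigma)=2\,\overline{3}\,1$ has $\exc=2$. Flipping the sign of the single value $\overline{n-i+1}$ keeps $|\sigma(k)|$ fixed but moves that value from below all positive letters to above them, and Brenti's excedance depends on the relative order of values along cycles, not on $|\sigma(k)|$ versus $k$. (Your deletion map $\Psi$ is actually fine even for the paper's statistic: it removes the singleton cycle $(\overline{n-i+1})$, which contributes no excedance, and relabels the remaining values by a strictly increasing map, so all comparisons $\sigma(|\sigma(i)|)>\sigma(i)$ are preserved.) As written, then, you have proved the recurrence for the polynomials of the statistic $\#\{k:|\sigma(k)|>k\}$ over $\widetilde{\mdn}_{n,i}^B$; to obtain the stated theorem you would additionally need that this statistic is equidistributed with the paper's excedance on every $\widetilde{\mdn}_{n,i}^B$, which you neither prove nor mention, and which is essentially the content at stake. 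The paper's own argument avoids this exact pitfall: it splits $\widetilde{\mdn}_{n,i}^B$ into three classes (whether $\overline{n}$ is a singleton, whether there are no singletons at all, and the remaining case) and uses bijections $\phi_1,\phi_2,\phi_3$ that act on values in an order-preserving way, hence preserve Brenti's excedance; your two-class decomposition with a bare sign flip cannot be repaired without a genuinely different idea in the $k\neq n-i+1$ case.
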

In the other sections, we shall prove some main results given in the next section.
\section{Main results}
\subsection{Eulerian polynomials}
\hspace*{\parindent}

A {\it big ascent} in a permutation $\pi\in\msn$ is an index $i$ such that $\pi(i+1)\geq \pi(i)+2$.
Let $\basc(\pi)$ be the number of big ascents of $\pi$. Clearly, $\asc(\pi)=\basc(\pi)+\suc(\pi)$.
Consider the following polynomials
$$A_n(x,y,s)=\sum_{\pi\in\msn}x^{\basc(\pi)}y^{\des(\pi)}s^{\suc(\pi)}.$$
We define $A_0(x,y,s)=1$. In particular, we have $A_n(x)=A_n(x,1,x)=A_n(1,x,1)$.
Below are the polynomials $A_n(x,y,s)$ for $1\leq n\leq 5$:
\begin{align*}
A_1(x,y,s)&=1,~
A_2(x,y,s)=s+y,~
A_3(x,y,s)=(s+y)^2+2xy,\\
A_4(x,y,s)&=(s+y)^3+6xy(s+y)+2xy(x+y),\\
A_5(x,y,s)&=(s+y)^4+12xy(s+y)^2+8xy(s+y)(x+y)+2xy(x+y)^2+16x^2y^2.
\end{align*}

Let $$A(x,y,s;z)=\sum_{n=0}^{\infty}A_{n+1}(x,y,s)\frac{z^n}{n!}.$$
Now we present the first main result of this paper.
\begin{theorem}\label{mainthm01}
We have
\begin{equation}\label{A-EGF}
A(x,y,s;z)=e^{z(y+s)}\left(\frac{y-x}{ye^{xz}-xe^{yz}}\right)^2.
\end{equation}
Moreover, for $n\geq 0$, we have
\begin{equation}\label{Anxys-gamma}
A_{n+1}(x,y,s)=\sum_{i=0}^n(s+y)^i\sum_{j=0}^{\lrf{(n-i)/2}}2^j\gamma_{n,i,j}(xy)^j(x+y)^{n-i-2j},
\end{equation}
where the numbers $\gamma_{n,i,j}$ satisfy the recurrence relation
\begin{equation}\label{gammanij-recu}
\gamma_{n+1,i,j}=\gamma_{n,i-1,j}+(1+i)\gamma_{n,i+1,j-1}+j\gamma_{n,i,j}+(n-i-2j+2)\gamma_{n,i,j-1},
\end{equation}
with the initial conditions $\gamma_{0,0,0}=1$ and $\gamma_{0,i,j}=0$ for $(i,j)\neq (0,0)$.
A combinatorial interpretation of $\gamma_{n,i,j}$ is given as follows:
\begin{equation}\label{gammanij-comb}
\gamma_{n,i,j}=\#\{\pi\in\rss_n: \fix(\pi)=i,~\exc(\pi)=j\}.
\end{equation}
In other words, the number $\gamma_{n,i,j}$ counts the number of simsun permutations of the second kind of order $n$ which have exactly $i$ fixed points and $j$ excedances.
\end{theorem}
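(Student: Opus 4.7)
The plan is to break the proof into three linked parts: deriving the EGF~(\ref{A-EGF}); establishing the expansion~(\ref{Anxys-gamma}) together with the recurrence~(\ref{gammanij-recu}); and verifying the combinatorial interpretation~(\ref{gammanij-comb}) via simsun permutations of the second kind.

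I would begin by classifying the $n+1$ insertion positions for the letter $n+1$ into $\pi\in\msn$ and tracking how each alters the triple $(\basc,\des,\suc)$: the leftmost gap; the rightmost gap (split by whether $\pi(n)=n$); and each inner gap between $\pi(k)$ and $\pi(k+1)$, classified as big ascent, descent (further split on whether $\pi(k)=n$), or succession. The two special boundary cases turn out to combine cleanly, and summing over $\pi\in\msn$ gives
\[
A_{n+1}(x,y,s)=(y+s)A_n(x,y,s)+xy(\partial_x+\partial_y+\partial_s)A_n(x,y,s).
\]
Translating to the EGF produces the PDE $\partial_z A=(y+s)A+xy(\partial_x+\partial_y+\partial_s)A$ with $A|_{z=0}=1$. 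Substituting the ansatz $A=e^{z(y+s)}H(x,y;z)^2$ (which is consistent because $\partial_s$ annihilates $H$) reduces the PDE to the single identity $\partial_z H=xyz\,H+xy(\partial_x+\partial_y)H$, verified directly for $H=(y-x)/(ye^{xz}-xe^{yz})$.

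Next, set $I=s+y$, $P=xy$, $E=x+y$ and $c_{n,i,j}=2^j\gamma_{n,i,j}$, so~(\ref{Anxys-gamma}) reads $A_{n+1}=\sum c_{n,i,j}I^iP^jE^{n-i-2j}$. Using $(\partial_x+\partial_y+\partial_s)I=(\partial_x+\partial_y+\partial_s)E=2$ and $(\partial_x+\partial_y+\partial_s)P=E$, the operator $L(f):=(y+s)f+xy(\partial_x+\partial_y+\partial_s)f$ acts on a single monomial as
\[
L(I^iP^jE^k)=I^{i+1}P^jE^k+2i\,I^{i-1}P^{j+1}E^k+j\,I^iP^jE^{k+1}+2k\,I^iP^{j+1}E^{k-1},
\]
and equating coefficients in $L(A_{n+1})=A_{n+2}$ (then dividing by $2^{j}$) yields~(\ref{gammanij-recu}); the base case $\gamma_{0,0,0}=1$ comes from $A_1=1$. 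For the combinatorial interpretation, let $\beta_{n,i,j}=\#\{\pi\in\rss_n:\fix(\pi)=i,\exc(\pi)=j\}$. Every $\pi\in\rss_n$ arises from a unique $\pi'\in\rss_{n-1}$ by inserting $n$ in exactly one of four valid ways: as a new one-cycle $(n)$; by absorbing a fixed point $a$ of $\pi'$ into the two-cycle $(a,n)$; by splicing $n$ immediately after a cycle valley; or by splicing $n$ immediately after a cycle double descent. Splicing after a cycle peak would turn that peak into a cycle double ascent, and no cycle double ascent is available in $\pi'$. Since in $\pi'\in\rss_{n-1}$ the numbers of cycle peaks and cycle valleys both equal $\exc(\pi')$ (a cyclic-sequence parity argument), and the remaining non-fixed positions are the $n-1-\fix(\pi')-2\exc(\pi')$ cycle double descents, the four insertion types realize the four terms of~(\ref{gammanij-recu}) exactly.

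The main obstacle is the last part: verifying that the four insertion types genuinely produce elements of $\rss_n$ and that no other insertion does. The subtle point is the ``for all $k$'' clause of the simsun-2 definition, but for $k\geq 1$ the top-$k$ deletion from $\pi$ coincides with the top-$(k-1)$ deletion from $\pi'\in\rss_{n-1}$, so only the adjacencies around the newly inserted $n$ need fresh checking, which is a local computation at the neighbours of $n$ and of $a=\pi^{-1}(n)$ in $\pi$.
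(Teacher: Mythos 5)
Your proposal is correct and takes essentially the same route as the paper: your insertion analysis of where $n+1$ can go (front, end, and the broken inner adjacency) is exactly what the paper encodes as a context-free grammar with a grammatical labeling, your substitution $(s+y,\,xy,\,x+y)$ with the operator acting on monomials is its ``change of grammar'' step leading to the same recurrence and the same PDE/closed-form verification, and your four-case insertion of the largest letter into the cycle form (new one-cycle, after a fixed point, after a cycle valley, after a cycle double descent, with insertion after a cycle peak forbidden) coincides with the paper's four cases for the combinatorial interpretation, since with no cycle double ascents the excedance positions are the cycle valleys and the count of cycle double descents is $n-\fix-2\exc$. No gaps beyond the level of detail the paper itself leaves implicit.
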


Recall that $P_n(x)=A_n(x,1,0)$.
It follows from~\eqref{A-EGF} that
$$A(x,1,0;z)=\sum_{n=0}^{\infty}P_{n+1}(x)\frac{z^n}{n!}=e^{z}\left(\frac{1-x}{e^{xz}-xe^{z}}\right)^2.$$
By using~\eqref{Ankx-deff}, we see that
$$A(x,1,1;z)=\left(\frac{x-1}{x-e^{(x-1)z}}\right)^2=A^2(x;z).$$

A {\it anti-excedance} of $\pi\in\msn$ is an index $i\in[n-1]$ such that $\pi(i)<i$. Let $\aexc(\pi)$ denote the number of anti-excedances of $\pi$.
Note that $\exc(\pi)+\aexc(\pi)+\fix(\pi)=n$.
We define
$$d_n(x,y)=\sum_{\pi\in\mdn_n}x^{\exc(\pi)}y^{\aexc(\pi)},$$
$$d_n(x,y,s)=\sum_{\pi\in\msn}x^{\exc(\pi)}y^{\aexc(\pi)}s^{\fix(\pi)}.$$
It follows from~\eqref{dxz-EGF} that
\begin{equation*}
d(x,y;z)=\sum_{n=0}^{\infty}d_n(x,y)\frac{z^n}{n!}=\frac{y-x}{ye^{xz}-xe^{yz}}.
\end{equation*}
Note that $d_n(x,y,s)=\sum_{i=0}^n\binom{n}{i}s^id_{n-i}(x,y)$. Hence
\begin{equation}\label{dxys-EGF}
d(x,y,s;z)=\sum_{n=0}^{\infty}d_n(x,y,s)\frac{z^n}{n!}=\frac{(y-x)e^{sz}}{ye^{xz}-xe^{yz}}.
\end{equation}
Define
$$\widetilde{A}_n(x,y)=\sum_{\pi\in\msn}x^{\asc(\pi)}y^{\des(\pi)+1},$$
and $\widetilde{A}_0(x,y)=1$. Using~\eqref{Ankx-deff}, it is routine to verify that
$$\sum_{n=0}^\infty\widetilde{A}_n(x,y)\frac{z^n}{n!}=\frac{(y-x)e^{yz}}{ye^{xz}-xe^{yz}}.$$
Then combining this with~\eqref{A-EGF} and~\eqref{dxys-EGF}, we get the following corollary, which gives a connection between the number of successions in $\ms_{n+1}$ and the number of fixed points in $\msn$.
\begin{corollary}
For $n\geq 0$, we have
\begin{equation*}
A_{n+1}(x,y,s)=\sum_{i=0}^n\binom{n}{i}\widetilde{A}_i(x,y)d_{n-i}(x,y,s),
\end{equation*}
In particular, $A_{n+1}(x,1,0)=\sum_{i=0}^n\binom{n}{i}A_i(x)d_{n-i}(x)$.
\end{corollary}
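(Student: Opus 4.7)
The statement is a generating function identity, so the plan is to read it off directly from the exponential generating functions already collected in the excerpt, then extract coefficients by binomial convolution.

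First I would lay out the three relevant EGFs side by side. By Theorem~\ref{mainthm01}, equation~\eqref{A-EGF},
$$A(x,y,s;z)=e^{z(y+s)}\left(\frac{y-x}{ye^{xz}-xe^{yz}}\right)^{2}.$$
By~\eqref{dxys-EGF},
$$d(x,y,s;z)=\frac{(y-x)e^{sz}}{ye^{xz}-xe^{yz}}.$$
Finally, for $\widetilde{A}_n(x,y)$ the excerpt already asserts
$$\widetilde{A}(x,y;z):=\sum_{n\geq 0}\widetilde{A}_n(x,y)\frac{z^n}{n!}=\frac{(y-x)e^{yz}}{ye^{xz}-xe^{yz}},$$
a routine check from~\eqref{Ankx-deff} which I would include as a one-line verification (use $\sum_{\pi\in\msn}x^{\asc(\pi)}y^{\des(\pi)}=A_n(x/y)y^{n-1}$ for $n\ge 1$ to reduce to~\eqref{Ankx-deff}).

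Next, I multiply the EGFs of $\widetilde{A}$ and $d$:
$$\widetilde{A}(x,y;z)\cdot d(x,y,s;z)=\frac{(y-x)e^{yz}}{ye^{xz}-xe^{yz}}\cdot\frac{(y-x)e^{sz}}{ye^{xz}-xe^{yz}}=e^{z(y+s)}\left(\frac{y-x}{ye^{xz}-xe^{yz}}\right)^{2},$$
which is exactly $A(x,y,s;z)$. Equating coefficients of $z^n/n!$ on both sides via the standard binomial convolution yields
$$A_{n+1}(x,y,s)=\sum_{i=0}^{n}\binom{n}{i}\widetilde{A}_i(x,y)\,d_{n-i}(x,y,s),$$
which is the desired identity.

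For the specialization, I would simply set $y=1$ and $s=0$. On the $\widetilde{A}$ side, $\widetilde{A}_i(x,1)=\sum_{\pi\in\ms_i}x^{\asc(\pi)}=A_i(x)$ by the equidistribution of $\asc$ and $\des$. On the $d$ side, $d_{n-i}(x,1,0)=\sum_{\pi\in\ms_{n-i}}x^{\exc(\pi)}\cdot\mathbf{1}[\fix(\pi)=0]=d_{n-i}(x)$. Substituting gives $A_{n+1}(x,1,0)=\sum_{i=0}^n\binom{n}{i}A_i(x)d_{n-i}(x)$, as claimed. There is essentially no obstacle here; the only part requiring any care is confirming the EGF for $\widetilde{A}_n(x,y)$, which is the shortest and most mechanical step.
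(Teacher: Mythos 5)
Your proof is correct and is essentially the paper's own argument: the corollary is obtained exactly by multiplying the EGF of $\widetilde{A}_n(x,y)$ with $d(x,y,s;z)$ from~\eqref{dxys-EGF}, recognizing the product as $A(x,y,s;z)$ in~\eqref{A-EGF}, and extracting coefficients by binomial convolution, with the specialization $y=1$, $s=0$ handled just as you do.
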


Setting $y=1$ in~\eqref{Anxys-gamma}, we see that
$$\sum_{\pi\in\ms_{n+1}}x^{\basc(\pi)}s^{\suc(\pi)}=\sum_{i=0}^n(1+s)^i\sum_{j=0}^{\lrf{(n-i)/2}}\gamma_{n,i,j}(2x)^j(1+x)^{n-i-2j},$$
and it reduces to~\eqref{An1x} when $s=x$.
We define
$$M(s,x;z)=\sum_{n=0}^\infty\sum_{i=0}^n\sum_{j=0}^{\lrf{(n-i)/2}}\gamma_{n,i,j}s^ix^j\frac{z^n}{n!}.$$
Then
$$M(s,x;z)=\sum_{n=0}^\infty\sum_{\pi\in\rss_n}s^{\fix(\pi)}x^{\exc(\pi)} \frac{z^n}{n!}.$$
Then from~\cite[p.~13]{Ma17}, we see that
\begin{equation}\label{Msxz-explicit}
M(s,x;z)=e^{z(s-1)}\left(\frac{\sqrt{2x-1}\sec\left(\frac{z}{2}\sqrt{2x-1}\right)}
{\sqrt{2x-1}-\tan\left(\frac{z}{2}\sqrt{2x-1}\right)}\right)^2.
\end{equation}
\subsection{Eulerian polynomials of type $B$}
\hspace*{\parindent}

Let $\pm[n]=[n]\cup\{-1,\ldots,-n\}$.
Let $B_n$ be the {\it hyperoctahedral group} of rank $n$. Let $\sigma=\sigma(1)\sigma(2)\cdots\sigma(n)\in B_n$.
Elements of $B_n$ are signed permutations of $\pm[n]$ with the property that $\sigma(-i)=-\sigma(i)$ for all $i\in [n]$.
Let $\des_B(\sigma)=\#\{i\in\{0,1,\ldots,n-1\}\mid \sigma(i)>\sigma({i+1})\}$, where $\sigma(0)=0$.
We say that $i\in [n]$ is a {\it weak excedance} of $\sigma$ if $\sigma(i)=i$ or $\sigma(|\sigma(i)|)>\sigma(i)$ (see~\cite[p.~431]{Brenti94}).
Let $\we(\sigma)$ be the number of weak excedances of $\sigma$.
According to~\cite[Theorem~3.15]{Brenti94}, the statistics $\des_B$ and $\we$ have the same distribution over $B_n$,
and their common enumerative polynomial is the
{\it Eulerian polynomial of type $B$}, i.e.,
$$B_n(x)=\sum_{\sigma\in B_n}x^{\des_B(\sigma)}=\sum_{\sigma\in B_n}x^{\we(\sigma)}.$$

A {\it left peak} of $\pi\in\msn$ is an index $i\in[n-1]$ such that $\pi(i-1)<\pi(i)>\pi(i+1)$, where we take $\pi(0)=0$.
Let $\lpk(\pi)$ be the number of left peaks in $\pi$.
Let $Q(n,i)=\{\pi\in\msn: \lpk(\pi)=i\}$.
By using the theory of enriched $P$-partitions, Petersen~\cite[Proposition~4.15]{Petersen07} obtained that
\begin{equation}\label{Bnxgamma}
B_n(x)=\sum_{i=0}^{\lrf{n/2}}4^iQ(n,i)x^i(1+x)^{n-2i},
\end{equation}
which has been extensively studied in recent years, see~\cite{Chow08,Lin15,Zeng16,Zhuang17} and references therein.
Let $b_n(x)=\sum_{i=0}^{\lrf{n/2}}4^iQ(n,i)x^i$.
According to~\cite[Proposition~4.10]{Chow08}, we have
\begin{equation}\label{bxz-EGF}
b(x;z)=1+\sum_{n=1}^\infty b_n(x)\frac{z^n}{n!}=\frac{\sqrt{4x-1}\sec\left(z\sqrt{4x-1}\right)}
{\sqrt{4x-1}-\tan\left(z\sqrt{4x-1}\right)}.
\end{equation}

Denote by $\overline{i}$ the negative element $-i$.
We say that $i$ is an {\it excedance} (resp.~{\it anti-excedance}, {\it fixed point}, {\it singleton}) of $\sigma$ if $\sigma(|\sigma(i)|)>\sigma(i)$ (resp.~$\sigma(|\sigma(i)|)<\sigma(i)$, $\sigma(i)=i$, $\sigma(i)=\overline{i}$).
Let $\exc(\sigma)$ (resp.~$\aexc(\sigma)$, $\fix(\sigma)$, $\ston(\sigma)$, $N(\sigma)$) be the number of excedances (resp.~anti-excedances, fixed points, singletons, negative entries) of $\sigma$.
Consider the following polynomials
$$B_n(x,y,s,t,p,q)=\sum_{\sigma\in B_n}x^{\exc(\sigma)}y^{\aexc(\sigma)}s^{\fix(\sigma)}t^{\ston(\sigma)}p^{\cyc(\sigma)}q^{N(\sigma)}.$$
Clearly, $A_n(x)=B_n(x,1,1,1,1,0)$ and $B_n(x)=B_n(x,1,x,1,1,1)$.
Below are the polynomials $B_n(x,y,s,t,p,q)$ for $0\leq n\leq 3$:
\begin{align*}
B_0(x,y,s,t,p,q)&=1,\\
B_1(x,y,s,t,p,q)&=p(s+qt),\\
B_2(x,y,s,t,p,q)&=p^2(s+qt)^2+p(1+q)^2xy,\\
B_3(x,y,s,t,p,q)&=p^3(s+qt)^3+3p^2(1+q)^2(s+qt)xy+p(1+q)^3xy(x+y).
\end{align*}

Now we give the second main result of this paper.
\begin{theorem}\label{mainthm02}
We have
\begin{equation}\label{Bxy-EGF}
B(x,y,s,t,p,q;z)=\sum_{n=0}^\infty B_n(x,y,s,t,p,q)\frac{z^n}{n!}=\left(\frac{(y-x)e^{(s+qt)z}}{ye^{(1+q)xz}-xe^{(1+q)yz}}\right)^p.
\end{equation}
For $n\geq 0$, we have
\begin{equation}\label{Bnxyspq-gamma}
B_n(x,y,s,t,p,q)=\sum_{i=0}^n(s+qt)^i(1+q)^{n-i}\sum_{j=0}^{\lrf{(n-i)/2}}b_{n,i,j}(p)(xy)^j(x+y)^{n-i-2j},
\end{equation}
where the coefficients $b_{n,i,j}(p)$ satisfy the recurrence relation
\begin{equation}\label{bnij-recu}
b_{n+1,i,j}(p)=pb_{n,i-1,j}(p)+(1+i)b_{n,i+1,j-1}(p)+jb_{n,i,j}(p)+2(n-i-2j+2)b_{n,i,j-1}(p),
\end{equation}
with the initial conditions $b_{0,0,0}(p)=1$ and $b_{0,i,j}(p)=0$ for $(i,j)\neq (0,0)$.
Moreover, we have
\begin{equation}\label{bnij-combin}
b_{n,i,j}(p)=\sum_{\pi\in \ms_{n,i,j}}p^{\cyc(\pi)}.
\end{equation}
where $\ms_{n,i,j}=\{\pi\in\msn: \cda(\pi)=0,~\fix(\pi)=i,~\exc(\pi)=j\}$.
In other words, the number $b_{n,i,j}(1)$ counts permutations in $\msn$ with no cycle double ascents, $i$ fixed pints and $j$ excedances.
\end{theorem}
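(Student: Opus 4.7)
The plan is to establish the three claims of Theorem~\ref{mainthm02} --- the EGF~\eqref{Bxy-EGF}, the $\gamma$-expansion~\eqref{Bnxyspq-gamma}, and the combinatorial interpretation~\eqref{bnij-combin} --- in sequence, paralleling the strategy used for Theorem~\ref{mainthm01}. For the EGF, I would first establish the case $p=1$ in the form
\[
B_n(x,y,s,t,1,q) \;=\; d_n\bigl((1+q)x,\,(1+q)y,\,s+qt\bigr),
\]
which via~\eqref{dxys-EGF} is exactly~\eqref{Bxy-EGF} at $p=1$. The substitution $s\mapsto s+qt$ captures the two sign choices at each fixed point of $|\sigma|$, while the joint scaling $x,y\mapsto(1+q)x,(1+q)y$ arises from summing $q^{|T|}$ over the sign sets $T$ on the non-fixed cycles; this identity is global rather than cycle-by-cycle (a short $3$-cycle calculation already shows the failure of a naive cycle-wise version), so I would verify it either by grouping signed permutations according to $(|\sigma|,T)$ and computing the EGF directly, or by checking that both sides satisfy the same first-order ODE in $z$. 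The extension to arbitrary $p$ then follows from the exponential formula applied to the orbit decomposition of signed permutations: if $C$ is the EGF of a single signed cycle weighted by $x^{\exc}y^{\aexc}s^{\fix}t^{\ston}q^{N}$, then $B(x,y,s,t,1,q;z)=\exp(C)$ and $B(x,y,s,t,p,q;z)=\exp(pC)=\exp(C)^p$, yielding~\eqref{Bxy-EGF}.

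For the $\gamma$-expansion~\eqref{Bnxyspq-gamma}, I would extract coefficients from~\eqref{Bxy-EGF} after factoring the EGF as $\bigl(e^{(s+qt)z}\,d(x,y;(1+q)z)\bigr)^p$, reducing the problem to $\gamma$-expanding $d(x,y;wz)^p$ in the variables $(x,y)$ using the symmetry $d_n(x,y)=d_n(y,x)$ (which follows from cycle inversion). The scaling $w=1+q$ contributes $w^{2j}\cdot w^{n-i-2j}=(1+q)^{n-i}$ to each $\gamma$-term, producing the clean $(1+q)^{n-i}$ factor in~\eqref{Bnxyspq-gamma} and leaving the $p$-dependent polynomial coefficients $b_{n,i,j}(p)$. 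The recurrence~\eqref{bnij-recu} then follows by differentiating this EGF with respect to $z$ and comparing coefficients of $z^n/n!$, or alternatively by a context-free grammar calculation in the spirit of the paper's Change-of-Grammars theorem. For the combinatorial interpretation~\eqref{bnij-combin}, I would set $F_{n,i,j}(p):=\sum_{\pi\in\ms_{n,i,j}} p^{\cyc(\pi)}$ and verify that the $F$'s satisfy~\eqref{bnij-recu} with the same initial data, which forces $F_{n,i,j}(p)=b_{n,i,j}(p)$. This recurrence arises from inserting the letter $n+1$ into $\pi'\in\msn$ with $\cda(\pi')=0$: the four terms correspond respectively to (i) opening a new fixed-point cycle with $n+1$ (factor $p$), (ii) merging $n+1$ with one of the $1+i$ existing fixed points to form a $2$-cycle, (iii) inserting $n+1$ immediately after one of the $j$ existing excedance positions, and (iv) inserting at the two safe positions around each of the remaining $n-i-2j+2$ non-excedance/non-fixed-point sites.

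The main obstacle is the insertion analysis in the last step, especially verifying the factor $2(n-i-2j+2)$: one must identify precisely which pairs of insertion sites avoid creating a new cycle double ascent while producing the correct changes in $(\exc,\fix,\cyc)$. This is a cycle-theoretic analogue of the modified Foata-Strehl action and requires a careful local case analysis at the point where the ``no cycle double ascent'' condition interacts with the excedance statistic. A secondary technical point is justifying the exponential-formula step in the first part: one must verify that the orbit decomposition of $\sigma\in B_n$ interacts additively with each of $\exc,\aexc,\fix,\ston,N$, so that the $p=1$ EGF really does raise to the $p$-th power to give the general-$p$ EGF.
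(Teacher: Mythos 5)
Your plan for \eqref{Bxy-EGF} (prove the $p=1$ case via the identity $B_n(x,y,s,t,1,q)=d_n\bigl((1+q)x,(1+q)y,s+qt\bigr)$ and then raise to the $p$-th power by the exponential formula, after checking that $\exc,\aexc,\fix,\ston,N$ are additive over cycles) is genuinely different from the paper, which instead derives the differential equation $\frac{\partial}{\partial z}\gen(J;z)=p\,\gen(J;z)\gen(s+qt;z)$ from a grammatical labeling of $B_n$ together with Dumont's grammar. That route is viable, although the $p=1$ identity is precisely where the work lies and you only sketch it; likewise, the existence of the expansion \eqref{Bnxyspq-gamma} with the $q$-dependence confined to $(1+q)^{n-i}$ does follow from your homogeneity/symmetry scaling argument once the EGF is in hand. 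Note, however, that differentiating the EGF in $z$ alone does not produce the three-index recurrence \eqref{bnij-recu}; one needs the induced derivation on $u=xy$, $v=x+y$, $h=s+qt$ (the paper's change of grammar $G_3$), which you mention only as a fallback.

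The genuine gap is in your verification of \eqref{bnij-recu} for $F_{n,i,j}(p)=\sum_{\pi\in\ms_{n,i,j}}p^{\cyc(\pi)}$, i.e.\ in the proof of \eqref{bnij-combin}. Your case (iv), ``two safe positions around each of the $n-i-2j+2$ remaining sites,'' is false as a direct insertion count. For $\pi\in\ms_{n,i,j-1}$ with $\cda(\pi)=0$ the relevant sites are the cycle double descents, and there are indeed $n-i-2j+2$ of them; but only the slot immediately \emph{after} such an element is safe. Inserting $n+1$ immediately \emph{before} a cycle double descent means inserting it right after its cycle predecessor, which is either a cycle peak (this creates a cycle double ascent) or another cycle double descent (a slot already counted). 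Hence direct insertion into no-cda permutations yields only $(n-i-2j+2)\,b_{n,i,j-1}(p)$, half of what \eqref{bnij-recu} demands. Concretely, $(1,4,2,3)\in\ms_{4,0,2}$ cannot be reached by inserting $4$ into any element of $\ms_{3,0,1}=\{(1,3,2)\}$: deleting $4$ gives $(1,2,3)$, which has the cycle double ascent $2$. The missing half is exactly what the paper supplies with the modified Foata--Strehl action on cycles: for each cycle double descent $x$ of $\pi\in\ms^1_{n,i,j-1,k}$ one passes to $\varphi'_x(\pi)\in\ms^2_{n,i,j,k}$ (same fixed points and cycles, one more excedance, $x$ the unique cycle double ascent) and inserts $n+1$ immediately before $x$; equivalently one needs the identity $|\ms^2_{n,i,j+1,k}|=(n-i-2j)\,|\ms^1_{n,i,j,k}|$, which accounts bijectively for those $\pi'\in\ms_{n+1,i,j}$ from which deleting $n+1$ creates a cycle double ascent. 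You flag this step as the main obstacle, but without this action (or an equivalent accounting) the crucial term $2(n-i-2j+2)b_{n,i,j-1}(p)$ is not established, and with it falls the combinatorial interpretation \eqref{bnij-combin}.
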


It should be noted that~\eqref{Bnxyspq-gamma} is comparable with the $\gamma$-positivity result of Shin and Zeng~\cite[Eq.~(4.9)]{Zeng16},
and the expansion formula~\eqref{Bnxgamma} and Proposition~\ref{Zeng} are both special cases of~\eqref{Bnxyspq-gamma}.

Note that
$B(x,y,s,y,1,1;z)=\sqrt{A(x,y,s;2z)}$,
where $A(x,y,s;z)$ is given by~\eqref{A-EGF}. So we immediately get the following result, which gives a connection between the number of successions in $\ms_{n+1}$ and the number of fixed points in $B_n$.
\begin{corollary}
For $n\geq 0$, we have
$$2^nA_{n+1}(x,y,s)=\sum_{i=0}^n\binom{n}{i}B_i(x,y,s,y,1,1)B_{n-i}(x,y,s,y,1,1).$$
\end{corollary}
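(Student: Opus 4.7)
The plan is to derive the identity by extracting coefficients from the generating function relation already pointed out in the paper, namely $B(x,y,s,y,1,1;z)=\sqrt{A(x,y,s;2z)}$. So the first step is to verify this square-root identity rigorously, and the second step is to square both sides and compare coefficients via the standard exponential generating function convolution.

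To carry out Step~1, I would start from~\eqref{Bxy-EGF} and specialize $t=y$, $p=1$, $q=1$. This yields
\begin{equation*}
B(x,y,s,y,1,1;z)=\frac{(y-x)e^{(s+y)z}}{ye^{2xz}-xe^{2yz}}.
\end{equation*}
On the other hand, replacing $z$ by $2z$ in~\eqref{A-EGF} gives
\begin{equation*}
A(x,y,s;2z)=e^{2z(y+s)}\left(\frac{y-x}{ye^{2xz}-xe^{2yz}}\right)^2,
\end{equation*}
and a direct comparison shows $B(x,y,s,y,1,1;z)^2=A(x,y,s;2z)$.

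For Step~2, the left-hand side of the desired identity is the coefficient of $z^n/n!$ in
\begin{equation*}
A(x,y,s;2z)=\sum_{n\geq 0}A_{n+1}(x,y,s)\frac{(2z)^n}{n!}=\sum_{n\geq 0}2^n A_{n+1}(x,y,s)\frac{z^n}{n!},
\end{equation*}
while the right-hand side is the coefficient of $z^n/n!$ in $B(x,y,s,y,1,1;z)^2$, which by the exponential generating function product formula equals $\sum_{i=0}^{n}\binom{n}{i}B_i(x,y,s,y,1,1)B_{n-i}(x,y,s,y,1,1)$. Equating these coefficients yields the stated identity.

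There is essentially no serious obstacle; the only subtlety is keeping track of the specialization $t=y$, $p=1$, $q=1$ correctly so that the exponential factors in the numerators of $B$ and $\sqrt{A}$ really match (the factor $e^{(s+qt)z}=e^{(s+y)z}$ must line up with half of the factor $e^{2(s+y)z}$ coming from $A(x,y,s;2z)$), and remembering that the EGF for $A$ is indexed by $A_{n+1}$ rather than $A_n$, which explains why $2^nA_{n+1}$ rather than $2^nA_n$ appears on the left-hand side.
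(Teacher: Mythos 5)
Your proposal is correct and follows the same route as the paper: the corollary is obtained exactly by noting $B(x,y,s,y,1,1;z)^2=A(x,y,s;2z)$ from the specializations of \eqref{Bxy-EGF} and \eqref{A-EGF}, then comparing coefficients of $z^n/n!$ in the squared exponential generating function. Your bookkeeping of the factor $e^{(s+y)z}$ versus $e^{2(s+y)z}$ and of the shift to $A_{n+1}$ is exactly the point the paper relies on implicitly.
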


Set $b_{n,i,j}=b_{n,i,j}(1)$.
We define
$$C(s,x;z)=\sum_{n=0}^\infty\sum_{i=0}^n\sum_{j=0}^{\lrf{(n-i)/2}}b_{n,i,j}s^ix^j\frac{z^n}{n!}.$$
Then
$$C(s,x;z)=\sum_{n=0}^\infty\sum_{\pi\in\mc_n}s^{\fix(\pi)}x^{\exc(\pi)} \frac{z^n}{n!},$$
where $\mc_n=\{\pi\in\msn: \cda(\pi)=0\}$.
We now provide an explicit formula for $C(s,x;z)$.
\begin{theorem}\label{Cxzthm}
We have
\begin{equation}\label{Csxz-explicit}
C(s,x;z)=e^{z\left(s-\frac{1}{2}\right)}\frac{\sqrt{4x-1}\sec\left(\frac{z}{2}\sqrt{4x-1}\right)}
{\sqrt{4x-1}-\tan\left(\frac{z}{2}\sqrt{4x-1}\right)}.
\end{equation}
\end{theorem}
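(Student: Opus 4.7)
The plan is to derive Theorem~\ref{Cxzthm} as a one-variable specialization of the type $B$ generating function~\eqref{Bxy-EGF} together with the $\gamma$-expansion~\eqref{Bnxyspq-gamma} of Theorem~\ref{mainthm02}. The key observation is that a clever substitution in the $\gamma$-expansion collapses the $(x+y)^{n-i-2j}$ factors and recovers precisely the bivariate polynomial $\sum_{i,j} b_{n,i,j}\,s^i x^j$ whose generating function we want.

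First I would set $p=1$ and $q=0$ in~\eqref{Bnxyspq-gamma}, which yields
\begin{equation*}
B_n(x,y,s,t,1,0) \;=\; \sum_{i,j} b_{n,i,j}\, s^i\, (xy)^j\, (x+y)^{n-i-2j}.
\end{equation*}
The critical specialization is $y:=1-x$, making $x+y=1$ and $xy=x(1-x)$, so the right-hand side collapses to $\sum_{i,j} b_{n,i,j}\, s^i\, (x(1-x))^j$. Setting $X:=x(1-x)$ identifies this sum with $[z^n/n!]\,C(s,X;z)$, and evaluating~\eqref{Bxy-EGF} at $(y,p,q)=(1-x,1,0)$ gives
\begin{equation*}
C(s,X;z) \;=\; \frac{(1-2x)\,e^{sz}}{(1-x)\,e^{xz}-x\,e^{(1-x)z}}.
\end{equation*}

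It remains to rewrite the right-hand side purely in terms of $X$ and $z$. Since $1-4X=(1-2x)^2$, I introduce $w:=1-2x=\sqrt{1-4X}$, so that $x=(1-w)/2$ and $1-x=(1+w)/2$. Pulling a factor of $e^{z/2}$ out of the denominator and collecting the remaining exponentials reduces it to $e^{z/2}\bigl(w\cosh(wz/2)-\sinh(wz/2)\bigr)$, giving a closed form for $C(s,X;z)$ in terms of $w$, $\cosh$ and $\sinh$. Finally, using $\sqrt{4X-1}=iw$ with $\cosh(wz/2)=\cos(z\sqrt{4X-1}/2)$ and $\sinh(wz/2)=-i\sin(z\sqrt{4X-1}/2)$, dividing numerator and denominator by $\cos(z\sqrt{4X-1}/2)$, and renaming $X$ back to $x$, delivers~\eqref{Csxz-explicit}.

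The main obstacle is not conceptual but careful bookkeeping: once the $\gamma$-sum collapses, one must track the branch $w=\sqrt{1-4X}$ through the imaginary-argument translation to the trigonometric form in $\sqrt{4X-1}$, and verify that the factors of $i$ cancel between numerator and denominator. With the substitution $y=1-x$ in place, everything else is algebraic manipulation of identities already supplied by Theorem~\ref{mainthm02}.
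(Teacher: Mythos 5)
Your proposal is correct, but it takes a genuinely different route from the paper. The paper proves \eqref{Csxz-explicit} by a change of grammar: it specializes the grammar $G_3$ at $p=1$, $q=0$ to $G_5=\{J\rightarrow Jh,\ h\rightarrow u,\ u\rightarrow uv,\ v\rightarrow 2u\}$, embeds this in the one-parameter grammar $G_4$, proves Lemma~\ref{LemmaGrammaG4} (namely $\gen(a;z)=aM^{q}(t/v,2u/v^{2};vz)$), and then takes $q=\tfrac12$ to obtain $C(s,x;z)=\sqrt{M(2s,2x;z)}$, concluding via the external formula \eqref{Msxz-explicit} from the earlier simsun paper. You instead specialize Theorem~\ref{mainthm02} directly: setting $p=1$, $q=0$ and then $y=1-x$ in \eqref{Bnxyspq-gamma} collapses $(x+y)^{n-i-2j}$ to $1$ and turns $xy$ into $X=x(1-x)$, so \eqref{Bxy-EGF} gives $C(s,x(1-x);z)=\frac{(1-2x)e^{sz}}{(1-x)e^{xz}-xe^{(1-x)z}}$, and the hyperbolic-to-trigonometric rewriting in $w=1-2x=\sqrt{1-4X}$ yields \eqref{Csxz-explicit}. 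Your computation is sound: the final expression is even in $\sqrt{4x-1}$ (the denominator $u\cos(uz/2)-\sin(uz/2)$ is an odd series in $u$, so after dividing by $u$ everything is a series in $u^{2}=4x-1$), which is exactly why the factors of $i$ cancel and the branch choice is immaterial; and since $x\mapsto x(1-x)$ has nonzero linear term, recovering $C(s,x;z)$ from $C(s,x(1-x);z)$ is legitimate at the level of formal power series — two small points worth stating explicitly. What each approach buys: yours is more elementary and self-contained (it needs only Theorem~\ref{mainthm02}, not \eqref{Msxz-explicit} nor any grammar machinery, and it is close in spirit to the paper's own observation that $C\bigl(1,\tfrac{x}{(1+x)^{2}};(1+x)z\bigr)$ recovers the stellahedron $h$-polynomials), whereas the paper's route produces the structural identity $C^{2}(s,x;z)=e^{sz}M(s,2x;z)$ linking permutations without cycle double ascents to simsun permutations of the second kind, which is recorded as a corollary and would not fall out of your argument by itself.
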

Comparing~\eqref{Csxz-explicit} with~\eqref{Msxz-explicit} and~\eqref{bxz-EGF}, we immediately get the following corollary.
\begin{corollary}
We have
\begin{equation}\label{Cxzbxz}
C(1/2,x;2z)=b(x;z),
\end{equation}
$$C^2(s,x;z)=e^{sz}M(s,2x;z).$$
\end{corollary}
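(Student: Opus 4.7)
The plan is to verify both identities by direct substitution into the closed-form expressions that are already available: the formula \eqref{Csxz-explicit} for $C(s,x;z)$ from Theorem~\ref{Cxzthm}, the formula \eqref{Msxz-explicit} for $M(s,x;z)$, and the formula \eqref{bxz-EGF} for $b(x;z)$. No combinatorial argument is needed; the content has already been absorbed into the generating function identities that precede this corollary.

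For the first identity, I would substitute $s=1/2$ into \eqref{Csxz-explicit} and then replace $z$ by $2z$. The exponential prefactor collapses, since $e^{2z(1/2-1/2)}=1$, and the half-angle $\tfrac{2z}{2}\sqrt{4x-1}=z\sqrt{4x-1}$ turns the trigonometric factor into exactly the right-hand side of \eqref{bxz-EGF}. Thus $C(1/2,x;2z)=b(x;z)$.

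For the second identity, I would square \eqref{Csxz-explicit} to obtain
\[
C^2(s,x;z)=e^{z(2s-1)}\left(\frac{\sqrt{4x-1}\sec\!\bigl(\tfrac{z}{2}\sqrt{4x-1}\bigr)}{\sqrt{4x-1}-\tan\!\bigl(\tfrac{z}{2}\sqrt{4x-1}\bigr)}\right)^{\!2}.
\]
On the other hand, substituting $2x$ for $x$ in \eqref{Msxz-explicit} gives the factor $\sqrt{2(2x)-1}=\sqrt{4x-1}$ inside the parenthesized expression and an exponential $e^{z(s-1)}$ in front. Multiplying by $e^{sz}$ produces $e^{z(2s-1)}$, matching the prefactor in $C^2(s,x;z)$, while the squared trigonometric factor coincides on the nose. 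Hence $C^2(s,x;z)=e^{sz}M(s,2x;z)$.

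The whole argument is just an algebraic comparison of three closed forms, so there is no genuine obstacle; the only thing to be careful about is bookkeeping the $1/2$ versus $1$ shifts in the exponential factors and the scaling $2x$ versus $4x-1$ inside the radicals. A brief remark that the combinatorial content, namely the identification of $\mc_n$-statistics with left-peak and simsun-of-the-second-kind statistics, is already encoded in the preceding theorems would complete the exposition.
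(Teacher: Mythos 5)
Your proposal is correct and follows exactly the paper's route: the paper obtains this corollary by directly comparing the closed forms \eqref{Csxz-explicit}, \eqref{Msxz-explicit} and \eqref{bxz-EGF}, which is precisely your substitution argument ($s=1/2$, $z\mapsto 2z$ for the first identity; squaring and the replacement $x\mapsto 2x$ with the exponential bookkeeping $e^{sz}e^{z(s-1)}=e^{z(2s-1)}$ for the second). No gap to report.
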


It should be noted that an equivalent formula of~\eqref{Cxzbxz} is given as follows:
\begin{equation*}
\sum_{\pi\in\mc_n}2^{n-\fix(\pi)}x^{\exc(\pi)}=\sum_{\pi\in\msn}(4x)^{\lpk(\pi)}.
\end{equation*}

Define  $$b_n(x)=\sum_{j=0}^{\lrf{n/2}}\sum_{i=0}^nb_{n,i,j}x^j(1+x)^{n-2j}.$$
It is routine to check that
$$\sum_{n=0}^\infty b_n(x) \frac{z^n}{n!}=C\left(1,\frac{x}{(1+x)^2};(1+x)z\right)=\frac{(x-1)e^{xz}}{x-e^{(x-1)z}}.$$
Thus $$b_n(x)=\sum_{i=0}^n\binom{n}{i}A_i(x)x^{n-i},$$
which is the $h$-polynomial of the stellahedra~\cite{Postnikov06}.
Moreover, one can easily verify that
$$C\left(0,\frac{x}{(1+x)^2};(1+x)z\right)=\frac{1-x}{e^{xz}-xe^{z}},$$
which is the exponential generating function of the derangement polynomials $d_n(x)$.

In the rest part of this subsection, we shall
consider the following multivariate polynomials
$$B_n(x,y,s,t,q)=\sum_{\sigma\in B_n}x^{\exc(\sigma)}y^{\aexc(\sigma)}s^{\fix(\sigma)}t^{\ston(\sigma)}q^{N(\sigma)},~B_0(x,y,s,t,q)=1.$$
Note that $B_n(x,y,s,t,q)=B_n(x,y,s,t,1,q)$.
In particular, we have
$$A_n(x)=B_n(x,1,1,0,0),~d_n(x)=B_n(x,1,0,0,0),$$
$$B_n(x)=B_n(x,1,x,1,1),~d_n^B(x)=B_n(x,1,0,1,1),$$
where $d_n^B(x)$ are the type $B$ derangement polynomials that will be discussed in the next subsection.
If follows from~\eqref{Bxy-EGF} that $$B(x,y,s,t,q;z)=\sum_{n=0}^\infty B_n(x,y,s,t,q)\frac{z^n}{n!}=\frac{(y-x)e^{(s+qt)z}}{ye^{(1+q)xz}-xe^{(1+q)yz}}.$$
The first few $B_n(x,y,s,t,q)$ are given as follows:
\begin{align*}
B_1(x,y,s,t,q)&=s+qt,\\
B_2(x,y,s,t,q)&=(s+qt)^2+(1+q)^2xy,\\
B_3(x,y,s,t,q)&=(s+qt)^3+3(1+q)^2(s+qt)xy+(1+q)^3xy(x+y).
\end{align*}

Define $\Phi_{0}(x,y)=\Phi_{1}(x,y)=0$, and
$$\Phi_{n}(x,y)=xy\frac{x^{n-1}-y^{n-1}}{x-y}=xy(x^{n-2}+x^{n-3}y+\cdots+xy^{n-3}+y^{n-2}) ~~\text{for $n\geq 2$}.$$
In particular, $\Phi_{2}(x,y)=xy$ and $\Phi_{3}(x,y)=xy(x+y)$.
We now present the following result.
\begin{theorem}\label{Bnxystq-recu}
For $n\geq 2$, we have
\begin{equation}\label{Bnxystq}
B_n(x,y,s,t,q)=(s+qt)^n+\sum_{k=0}^{n-2}\binom{n}{k}B_k(x,y,s,t,q)\Phi_{n-k}(x,y)(1+q)^{n-k},
\end{equation}
\end{theorem}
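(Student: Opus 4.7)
The plan is to recognize the identity as a binomial convolution at the level of exponential generating functions. Let $B(z):=B(x,y,s,t,q;z)$, $E(z):=e^{(s+qt)z}$, and
\[
\Psi(z):=\sum_{n\geq 0}\Phi_n(x,y)(1+q)^n\frac{z^n}{n!}=\sum_{n\geq 2}\Phi_n(x,y)(1+q)^n\frac{z^n}{n!},
\]
where the sum effectively starts at $n=2$ because $\Phi_0=\Phi_1=0$. The stated identity holds trivially at $n=0,1$ (both sides reduce to $1$ and $s+qt$ respectively, with no convolution contribution), so the claim for $n\geq 2$ is equivalent to the single functional equation
\[
B(z)=E(z)+B(z)\Psi(z),\qquad\text{i.e.,}\qquad 1-\Psi(z)=\frac{E(z)}{B(z)}.
\]

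First I would evaluate $\Psi(z)$ in closed form. Writing $\Phi_n(x,y)=(yx^n-xy^n)/(x-y)$ for $n\geq 2$ and splitting into two exponential series, the missing $n=0,1$ terms produce correction pieces
\[
\Psi(z)=\frac{y}{x-y}\bigl(e^{(1+q)xz}-1-(1+q)xz\bigr)-\frac{x}{x-y}\bigl(e^{(1+q)yz}-1-(1+q)yz\bigr).
\]
The linear parts cancel against each other since $\frac{y\cdot x(1+q)z - x\cdot y(1+q)z}{x-y}=0$, and the constants combine into $(y-x)/(x-y)=-1$, yielding the compact form
\[
\Psi(z)=1+\frac{ye^{(1+q)xz}-xe^{(1+q)yz}}{x-y}.
\]

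Second, I would substitute the specialization $p=1$ of the closed-form EGF from Theorem~\ref{mainthm02}, namely
\[
B(z)=\frac{(y-x)e^{(s+qt)z}}{ye^{(1+q)xz}-xe^{(1+q)yz}},
\]
so that $E(z)/B(z)=(ye^{(1+q)xz}-xe^{(1+q)yz})/(y-x)$. This equals $1-\Psi(z)$ on the nose, verifying the functional equation; extracting the coefficient of $z^n/n!$ on both sides of $B(z)=E(z)+B(z)\Psi(z)$ recovers~\eqref{Bnxystq} for every $n\geq 2$.

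The conceptual content is entirely in identifying the right-hand side as a binomial convolution of $B$ with the sequence $\bigl(\Phi_n(x,y)(1+q)^n\bigr)_{n\geq 0}$; the rest is closed-form arithmetic using an EGF already in hand. There is no serious obstacle, but the one place demanding care is the boundary bookkeeping when summing $\Psi(z)$, so that the constant $1$ and the absence of a linear term in $z$ (reflecting $\Phi_0=\Phi_1=0$) appear correctly and match the given closed form of $B(z)$.
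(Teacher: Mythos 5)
Your proposal is correct and is essentially the paper's own proof: both arguments reduce the claimed identity to the EGF functional equation $B(z)=e^{(s+qt)z}+B(z)\Psi(z)$, verify it using the closed form of $B(x,y,s,t,q;z)$ obtained from~\eqref{Bxy-EGF} with $p=1$ together with the closed form of $\sum_{n\ge 0}\Phi_n(x,y)(1+q)^n z^n/n!$, and then extract coefficients. The only blemish is a slip of wording: in your computation of $\Psi(z)$ the constant terms combine to $\frac{x-y}{x-y}=+1$ (not $-1$), which is exactly what your displayed formula $\Psi(z)=1+\frac{ye^{(1+q)xz}-xe^{(1+q)yz}}{x-y}$ uses, so the argument stands as written.
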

\begin{proof}
It is easy to verify that
$$\Phi(x,y,q;z):=\sum_{n=0}^\infty\Phi_{n}(x,y)(1+q)^n\frac{z^n}{n!}=1-\frac{ye^{(1+q)xz}-xe^{(1+q)yz}}{y-x}.$$
For $n\geq 2$, we define
\begin{equation}\label{fnxystq}
f_n(x,y,s,t,q)=(s+qt)^n+\sum_{k=0}^{n-2}\binom{n}{k}B_k(x,y,s,t,q)\Phi_{n-k}(x,y)(1+q)^{n-k}.
\end{equation}
Set $f_0(x,y,s,t,q)=1$ and $f_1(x,y,s,t,q)=s+qt$.
It follows from~\eqref{fnxystq} that
\begin{align*}
f(x,y,s,t,q;z)&=\sum_{n=0}^\infty f_n(x,y,s,t,q)\frac{z^n}{n!}\\
=&e^{(s+qt)z}+B(x,y,s,t,q;z)\Phi(x,y,q;z)\\
&=e^{(s+qt)z}+\left(\frac{(y-x)e^{(s+qt)z}}{ye^{(1+q)xz}-xe^{(1+q)yz}}\right)\left(1-\frac{ye^{(1+q)xz}-xe^{(1+q)yz}}{y-x}\right)\\
&=B(x,y,s,t,q;z),
\end{align*}
which leads to the desired result.
This completes the proof.
\end{proof}
The following corollary is immediate from Theorem~\ref{Bnxystq-recu} by considering some special cases.
\begin{corollary}\label{coro}
For $n\geq 2$, we have
\begin{align*}
A_n(x)&=1+\sum_{k=0}^{n-2}\binom{n}{k}A_k(x)(x+x^2+\cdots+x^{n-1-k});\\
d_n(x)&=\sum_{k=0}^{n-2}\binom{n}{k}d_k(x)(x+x^2+\cdots+x^{n-1-k});\\
B_n(x)&=(1+x)^n+\sum_{k=0}^{n-2}\binom{n}{k}B_k(x)(x+x^2+\cdots+x^{n-1-k})2^{n-k};\\
d_n^B(x)&=1+\sum_{k=0}^{n-2}\binom{n}{k}d_k^B(x)(x+x^2+\cdots+x^{n-1-k})2^{n-k};\\
\end{align*}
\end{corollary}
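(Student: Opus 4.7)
The plan is to obtain each of the four identities in Corollary~\ref{coro} as a direct specialization of Theorem~\ref{Bnxystq-recu}, using the identifications of $A_n(x)$, $d_n(x)$, $B_n(x)$, and $d_n^B(x)$ as values of $B_n(x,y,s,t,q)$ recorded just after equation~\eqref{Bxy-EGF}.

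First I would evaluate $\Phi_{n-k}(x,y)$ at $y=1$. From the definition
$$\Phi_{m}(x,1)=x\cdot\frac{x^{m-1}-1}{x-1}=x+x^2+\cdots+x^{m-1} \qquad (m\geq 2),$$
so with $m=n-k$ this produces exactly the factor $(x+x^2+\cdots+x^{n-1-k})$ appearing in every line of the corollary. All four specializations set $y=1$, so this computation is done once and reused.

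Next I would run through the four parameter choices and read off each identity from \eqref{Bnxystq}. For $A_n(x)=B_n(x,1,1,0,0)$ one has $(s+qt)^n=1$ and $(1+q)^{n-k}=1$, yielding the first line. For $d_n(x)=B_n(x,1,0,0,0)$ one has $(s+qt)^n=0$ and $(1+q)^{n-k}=1$, yielding the second line. For $B_n(x)=B_n(x,1,x,1,1)$ one has $(s+qt)^n=(1+x)^n$ and $(1+q)^{n-k}=2^{n-k}$, yielding the third line. Finally, for $d_n^B(x)=B_n(x,1,0,1,1)$ one has $(s+qt)^n=1$ and $(1+q)^{n-k}=2^{n-k}$, yielding the fourth line.

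There is no real obstacle; this is a purely mechanical unpacking of Theorem~\ref{Bnxystq-recu}. The only mild point of care is to confirm, before substituting, that the four parameter assignments really do encode the claimed polynomials (this is the content of the four identities $A_n=B_n(x,1,1,0,0)$, $d_n=B_n(x,1,0,0,0)$, $B_n=B_n(x,1,x,1,1)$, $d_n^B=B_n(x,1,0,1,1)$ stated in the text), after which each line of the corollary drops out immediately.
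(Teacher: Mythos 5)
Your proposal is correct and matches the paper's own treatment: the paper states the corollary is immediate from Theorem~\ref{Bnxystq-recu} by specializing, which is exactly your route via $\Phi_{n-k}(x,1)=x+x^2+\cdots+x^{n-1-k}$ and the four parameter assignments $A_n(x)=B_n(x,1,1,0,0)$, $d_n(x)=B_n(x,1,0,0,0)$, $B_n(x)=B_n(x,1,x,1,1)$, $d_n^B(x)=B_n(x,1,0,1,1)$. The computations of $(s+qt)^n$ and $(1+q)^{n-k}$ in each case are all correct.
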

Very recently, by using the theory of geometric combinatorics, Juhnke-Kubitzke, Murai and Sieg~\cite[Corollary~4.2]{Sieg19}
obtained the recurrence relation of $d_n(x)$ that is given in Corollary~\ref{coro}. It would be interesting to provide a geometric interpretation for
Theorem~\ref{Bnxystq-recu}.
\subsection{Derangement polynomials}
\hspace*{\parindent}

We say that $i$ is an {\it excedance of type $B$} if $\sigma(i)=\overline{i}$ or $\sigma(|\sigma(i)|)>\sigma(i)$ (see~\cite[p.~431]{Brenti94}).
Let $\exc_B(\sigma)$ be the number of excedances of type $B$. Then $\exc_B(\sigma)=\exc(\sigma)+\st(\sigma)$.
A {\it derangement of type $B$} is a
signed permutation $\pi\in B_n$ with no fixed points.
Let $\mdn_n^B$ be the set of all derangements in $B_n$.
In the past decades, the following two kinds of derangement polynomials for $\mdn_n^B$ have been extensively studied:
\begin{align*}
d_n^B(x)=\sum_{\pi\in \mdn_n^B}x^{\exc(\pi)},~
\widetilde{d}_n^B(x)=\sum_{\pi\in \mdn_n^B}x^{\exc_B(\pi)}.
\end{align*}
As pointed out by Chow~\cite[Theorem~5.1]{Chow09}, $d_n^B(x)=x^n\widetilde{d}_n^B(1/x)$. Moreover,
Chow~\cite[Theorem~4.7]{Chow09} showed that $d_n^B(x)$ can be expressed as a sum of certain nonnegative unimodal polynomials.
Chen et al.~\cite[Theorem~4.6]{Chen09} showed that the polynomials $\widetilde{d}_n^B(x)$ possess the spiral property.
Athanasiadis and Savvidou~\cite{Athanasiadis13} proved that there are nonnegative integers $\xi_{n,k}^+$ and $\xi_{n,k}^-$ such that
\begin{equation}\label{Dnbx-gamma}
\widetilde{d}_n^B(x)=\sum_{k\geq 0}\xi_{n,k}^+x^k(1+x)^{n-2k}+\sum_{k\geq 0}\xi_{n,k}^-x^k(1+x)^{n+1-2k}.
\end{equation}

Consider the {\it type $B$ $q$-derangement polynomials}
$$d_n^B(x,q)=\sum_{\sigma\in \mdn_n^B}x^{\exc(\sigma)}q^{N(\sigma)}.$$
Note that $d_n^B(x,0)=d_n(x)$.
Below are the polynomials $d_n^B(x,q)$ for $0\leq n\leq 3$:
\begin{align*}
d_0^B(x,q)&=1,~
d_1^B(x,q)=q,~
d_2^B(x,q)=x+2qx+q^2(1+x),\\
d_3^B(x,q)&=x(1+x)+3qx(2+x)+3q^2x(3+x)+q^3(1+4x+x^2),\\
d_4^B(x,q)&=x(1+7x+x^2)+4qx(2+8x+x^2)+6q^2x(4+9x+x^2)+\\&4q^3x(7+10x+x^2)+q^4(1+11x+11x^2+x^3).
\end{align*}
As special cases of~\eqref{Bxy-EGF} and~\eqref{Bnxyspq-gamma}, we get that
\begin{equation}\label{Dnbxq-EGF}
\sum_{n=0}^\infty d_n^B(x,q)\frac{z^n}{n!}=\frac{(1-x)e^{qz}}{e^{(1+q)xz}-xe^{(1+q)z}},
\end{equation}
\begin{equation}\label{Dnbxq}
d_n^B(x,q)=\sum_{i=0}^nq^i(1+q)^{n-i}\sum_{j=0}^{\lrf{(n-i)/2}}b_{n,i,j}x^j(x+1)^{n-i-2j}.
\end{equation}

For nonnegative integers $m$ and $n$, let $[m,n]=\{m,m+1,\ldots, n\}$.
For integers $n,r\geq 1$, an {\it $r$-colored permutation} can be written as $\pi^c$, where $\pi\in \msn$ and
$c=(c_1,c_2,\ldots,c_n)\in [0,r-1]^n$. As usual, $\pi^c$ can be denoted as $\pi_1^{c_1}\pi_2^{c_2}\cdots\pi_n^{c_n}$, where $c_i$ can
be thought of as the color assigned to $\pi_i$.
Denote by $\z_r\wr\msn$ the set of all $r$-colored permutations of order $n$.
Given an element $\pi^c\in \z_r \wr \msn$.
Following Steingr\'imsson~\cite{Steingrimsson},
we say that an entry $\pi_i^{c_i}$ is an {\it excedance} of $\pi^c$ if $\pi_i>i$ or $\pi_i=i$ and $c_i>0$.
Let $\exc(\pi^c)$ be the number of excedances of $\pi^c$.
A {\it fixed point} of $\pi^c\in \z_r \wr \msn$ is an entry $\pi_k^{c_k}$ such that $\pi_k=k$ and $c_k=0$.
An element $\pi^c\in \z_r \wr \msn$ is called a {\it derangement} if it has no fixed points.
Let $\D_{n,r}$ be the set of derangements in $\z_r \wr \msn$.
The {\it$r$-colored derangement polynomial} is defined by
$$d_{n,r}(x)=\sum_{\pi^c\in \D_{n,r}}x^{\exc(\pi^c)}.$$

There has been much work on the polynomials $d_{n,r}(x)$.
Chow and Toufik~\cite[Proposition~4]{Chow10} found that for $n,r\geq 1$,
$$d_{n,r}(x)=\sum_{\pi\in\msn}(r-1)^{\fix(\pi)}r^{n-\fix(\pi)}x^{\exc(\pi)+\fix(\pi)}.$$
Combining results of Shareshian and Wachs~\cite{Shareshian09} and Linusson, Shareshian and Wachs~\cite{Linusson12} on the homology of Rees products of posets,
Athanasiadis~\cite[Theorem 1.3]{Athanasiadis14} obtained that there are nonnegative integers $\xi^+_{n,r,i}$ and $\xi^-_{n,r,i}$
such that
\begin{equation}\label{dnrx-gamma}
d_{n,r}(x)=\sum_{i=0}^{\lrf{n/2}}\xi^+_{n,r,i}x^i(1+x)^{n-2i}+\sum_{i=1}^{\lrf{(n+1)/2}}\xi^-_{n,r,i}x^i(1+x)^{n+1-2i}.
\end{equation}
Shin and Zeng~\cite{Zeng16} proved that the polynomials $d_{n,r}(x)$ and the flag excedance polynomials for $\D_{n,r}$ have several similar expansion formulas.

According to~\cite[Theorem 5]{Chow10}, we have
\begin{equation}\label{dnrx-EGF}
\sum_{n=0}^\infty d_{n,r}(x)\frac{z^n}{n!}=\frac{(1-x)e^{(r-1)xz}}{e^{rxz}-xe^{rz}}.
\end{equation}
Comparing~\eqref{Dnbxq-EGF} with~\eqref{dnrx-EGF}, we find the following result.
\begin{proposition}\label{propdnbx}
When $q$ is a nonnegative integer, we have
$d_n^B(x,q)=x^nd_{n,q+1}\left({1}/{x}\right)$.
\end{proposition}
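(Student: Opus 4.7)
The plan is to prove the identity by comparing the two exponential generating functions \eqref{Dnbxq-EGF} and \eqref{dnrx-EGF}. Specifically, I will show that the EGF of the sequence $x^n d_{n,q+1}(1/x)$ coincides with the EGF of $d_n^B(x,q)$, and then invoke the uniqueness of the coefficients.

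First I would set $r=q+1$ in \eqref{dnrx-EGF} to obtain
\[
\sum_{n=0}^\infty d_{n,q+1}(x)\frac{z^n}{n!}=\frac{(1-x)e^{qxz}}{e^{(q+1)xz}-xe^{(q+1)z}}.
\]
Next I would perform the substitution $x\mapsto 1/x$ in this identity (treating $x$ as an indeterminate, so this is purely formal and imposes no restriction beyond $x\neq 0$), yielding a closed form for $\sum_{n\ge 0} d_{n,q+1}(1/x)\,z^n/n!$. To match the extra factor of $x^n$ appearing in the right-hand side of Proposition~\ref{propdnbx}, I would then replace $z$ by $xz$, which transforms $\sum_{n\ge 0} d_{n,q+1}(1/x)\,z^n/n!$ into $\sum_{n\ge 0} x^n d_{n,q+1}(1/x)\,z^n/n!$.

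At this point the bulk of the work is an algebraic simplification: clear the factors $1/x$ from the numerator and denominator by multiplying through by a suitable power of $x$, and check that the resulting rational-exponential expression equals
\[
\frac{(1-x)e^{qz}}{e^{(1+q)xz}-xe^{(1+q)z}},
\]
which is exactly the EGF of $d_n^B(x,q)$ given by \eqref{Dnbxq-EGF}. Equating coefficients of $z^n/n!$ then yields the claimed identity for every nonnegative integer $q$.

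I do not anticipate any genuine obstacle here; the argument is essentially a one-line manipulation of generating functions, and the only subtlety is keeping track of the substitutions $x\mapsto 1/x$ and $z\mapsto xz$ and the accompanying factors of $x$ when clearing denominators. A brief remark might be added to note that, although $q$ is required to be a nonnegative integer for $d_{n,q+1}(x)$ to carry its combinatorial meaning, the generating function identity itself holds as a formal identity in $x, q, z$, so the proposition can in fact be viewed as a polynomial identity in $q$.
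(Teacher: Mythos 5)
Your proposal is correct and follows exactly the paper's route: the paper proves Proposition~\ref{propdnbx} precisely by comparing \eqref{Dnbxq-EGF} with \eqref{dnrx-EGF} at $r=q+1$, and your substitutions $x\mapsto 1/x$, $z\mapsto xz$ carry out the verification that the paper leaves implicit. The computation indeed closes: $\frac{(1-1/x)e^{qz}}{e^{(q+1)z}-\frac{1}{x}e^{(q+1)xz}}=\frac{(1-x)e^{qz}}{e^{(1+q)xz}-xe^{(1+q)z}}$, so equating coefficients of $z^n/n!$ gives the claim.
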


From Proposition~\ref{propdnbx}, we see that the polynomials $d_n^B(x,q)$ have the similar expansion as~\eqref{dnrx-gamma}, and when $q=1$,
one can derive~\eqref{Dnbx-gamma}. Here we provide such an expansion.
\begin{theorem}\label{theoremdnbx}
For $n\geq 1$, we have
we have
\begin{equation}\label{fnbigamma}
d_n^B(x,q)=f_{n}^{+}(x,q)+f_n^-(x,q),
\end{equation}
where
\begin{equation*}
f_{n}^{+}(x,q)=\sum_{j=0}^{\lrf{n/2}}f_{n,j}^+(q)x^j(1+x)^{n-2j},~f_n^-(x,q)=\sum_{j=0}^{\lrf{(n-1)/2}}f_{n,j}^-(q)x^j(1+x)^{n-1-2j}.
\end{equation*}
Moreover, the polynomials $f_{n}^{+}(x,q)$ and $f_n^-(x,q)$ satisfy the following recurrence system
\begin{align*}
f_{n+1}^+(x,q)&=n(1+q)xf_{n}^+(x,q)+(1+q)x(1-x)\frac{d}{dx}f_{n}^+(x,q)+\\
&(1+q)nxf_{n-1}^+(x,q)+xf_{n}^-(x,q),\\
f_{n+1}^-(x,q)&=\left(q(1+x)+(n-1)(1+q)x\right)f_{n}^-(x,q)+(1+q)x(1-x)\frac{d}{dx}f_{n}^-(x,q)+\\
&(1+q)nxf_{n-1}^-(x,q)+qf_{n}^+(x,q),
\end{align*}
with the initial conditions $f_{0}^+(x,q)=1$ and $f_{0}^-(x,q)=0$.
When $q\geq 0$, the polynomials $f_{n}^{+}(x,q)$ and $f_{n}^{-}(x,q)$ are both $\gamma$-positive.
\end{theorem}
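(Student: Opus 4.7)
The plan is to treat the stated recurrence system as the \emph{definition} of $f_n^{\pm}(x,q)$, and then prove the decomposition $d_n^B = f_n^+ + f_n^-$ and the $\gamma$-positivity separately by induction on $n$. For the decomposition, I would first extract from~\eqref{Dnbxq-EGF} the three-term recurrence
\[
d_{n+1}^B(x,q) = \bigl[n(1+q)x + q\bigr]\, d_n^B(x,q) + (1+q)\, x(1-x)\, \tfrac{d}{dx} d_n^B(x,q) + (1+q)\, n\, x\, d_{n-1}^B(x,q)
\]
by differentiating the EGF in $z$ and reorganizing. Adding the two recurrences for $f_{n+1}^{\pm}$ from the theorem, the mixed pieces $xf_n^-$, $qf_n^+$, $n(1+q)xf_n^+$, and $[q(1+x)+(n-1)(1+q)x]f_n^-$ combine into $[n(1+q)x+q](f_n^+ + f_n^-)$, while the derivative and shift terms collapse to $(1+q)x(1-x)\tfrac{d}{dx}(f_n^+ + f_n^-) + (1+q)nx(f_{n-1}^+ + f_{n-1}^-)$. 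Matching initial values yields $d_n^B = f_n^+ + f_n^-$ by induction.

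For the $\gamma$-positivity the key lemma is the action of the operator $L_m := mx + x(1-x)\tfrac{d}{dx}$ on the gamma-basis element $B_{m,j} := x^j(1+x)^{m-2j}$:
\[
L_m B_{m,j} = j\, B_{m+1,j} + 2(m-2j)\, B_{m+1,j+1}.
\]
Direct expansion yields a would-be off-basis monomial $x^{j+2}(1+x)^{m-2j-1}$, which is eliminated by the telescoping identity $(1+x) - x = 1$, leaving a manifestly nonnegative combination of gamma-basis elements of degree $m+1$. Specializing with $m = n$ (for $f_n^+$) and $m = n-1$ (for $f_n^-$, whose gamma basis is $x^j(1+x)^{n-1-2j}$) handles the two differential-operator blocks in the recurrences. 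The remaining cross-terms land on the correct gamma basis by inspection: $x \cdot x^j(1+x)^{n-1-2j} = B_{n+1,j+1}$ handles both $xf_n^-$ and $xf_{n-1}^+$; the factor $(1+x)$ in $q(1+x)f_n^-$ converts the $f_n^-$ basis into the $f_{n+1}^-$ basis $x^j(1+x)^{n-2j}$; and the $f_n^+$ basis coincides with the $f_{n+1}^-$ basis, so $qf_n^+$ needs no reshuffling.

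Substituting the inductive $\gamma$-expansions into the two recurrences and applying the lemma then extracts the coefficient recursions
\begin{align*}
f_{n+1,k}^+(q) &= (1+q)\, k\, f_{n,k}^+(q) + 2(1+q)(n-2k+2)\, f_{n,k-1}^+(q) + n(1+q)\, f_{n-1,k-1}^+(q) + f_{n,k-1}^-(q),\\
f_{n+1,k}^-(q) &= \bigl[q + (1+q)\, k\bigr]\, f_{n,k}^-(q) + 2(1+q)(n+1-2k)\, f_{n,k-1}^-(q) + n(1+q)\, f_{n-1,k-1}^-(q) + q\, f_{n,k}^+(q).
\end{align*}
Each right-hand coefficient is manifestly nonnegative for $q \geq 0$, and the base cases $f_{0,0}^+(q) = 1$ and $f_{1,0}^-(q) = q$ are nonnegative polynomials in $q$, so the induction establishes the claim.

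The main obstacle is the gamma-basis identity for $L_m$. The coefficients in the theorem's recurrence system are precisely calibrated so that $mx B_{m,j} + x(1-x) B_{m,j}'$ telescopes cleanly via $(1+x)-x = 1$; a slightly different coefficient in front of either $mx$ or $x(1-x)\partial_x$ would leave an uncancelled off-basis monomial $x^{j+2}(1+x)^{m-2j-1}$ that is itself \emph{not} $\gamma$-positive as a degree-$(m+1)$ polynomial, sinking the induction. Equally nontrivial is checking that the coupling between $f^+$ and $f^-$---two gamma-positive pieces of different degrees---routes every coupled term into exactly the right basis, which is what keeps this two-color induction self-sustaining.
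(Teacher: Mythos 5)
Your proposal is correct, and it proves the theorem by a genuinely different route than the paper. The paper works entirely inside the context-free grammar machinery: it specializes the signed-permutation grammar to $G_6$, performs the change of grammar $J\to Js+qH$, $H\to Hs+qHv+Ju$, $u\to(1+q)uv$, $v\to 2(1+q)u$ (with $H=Jy$, $u=xy$, $v=x+y$), expands $D_{G_7}^n(J)$ into a $J$-part and an $H$-part, and reads off the recursions for the $\gamma$-coefficients $f_{n,j}^{\pm}(q)$ by comparing coefficients of $Ju^jv^{n+1-2j}$ and $Hu^jv^{n-2j}$ after setting $s=0$; the decomposition $d_n^B=f_n^++f_n^-$ then falls out of the grammar specialization $D_{G_6}^n\mid_{y=1,s=0}=Jd_n^B(x,q)$. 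You instead take the recurrence system as the definition of $f_n^{\pm}$, verify from the EGF \eqref{Dnbxq-EGF} that $d_{n+1}^B=[q+n(1+q)x]d_n^B+(1+q)x(1-x)\frac{d}{dx}d_n^B+(1+q)nx\,d_{n-1}^B$ (this PDE check is routine and does hold), observe that summing your two recurrences reproduces exactly this recurrence for $f_n^++f_n^-$ (the cross-terms $x f_n^-$ and $qf_n^+$ combine with the diagonal terms to give the common coefficient $q+n(1+q)x$), and then run a two-colored $\gamma$-basis induction using the identity $\bigl(mx+x(1-x)\frac{d}{dx}\bigr)x^j(1+x)^{m-2j}=j\,x^j(1+x)^{m+1-2j}+2(m-2j)\,x^{j+1}(1+x)^{m-1-2j}$, which I checked is valid, with $m=n$ for the $f^+$ block and $m=n-1$ for the $f^-$ block; the resulting coefficient recursions agree exactly with the ones the paper extracts from $G_7$, so nonnegativity for $q\ge 0$ follows identically. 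What the paper's grammar approach buys is a uniform mechanism (the same labeling that produced Theorem~\ref{mainthm02}) and an implicit combinatorial meaning for the split into $J$- and $H$-parts; what your approach buys is independence from the grammar formalism, a clean separation of the two claims (decomposition versus positivity), and an explicit reusable lemma about how the operator $mx+x(1-x)\frac{d}{dx}$ acts on the $\gamma$-basis, which transparently explains why the coefficients in the stated recurrence system are exactly the right ones.
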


Below are the polynomials $f_{n}^{+}(x,q)$ and $f_{n}^{-}(x,q)$ for $1\leq n\leq 3$:
\begin{align*}
f_{1}^+(x,q)&=0,~f_{1}^-(x,q)=q,\\
f_{2}^+(x,q)&=(1+2q)x,~f_{2}^-(x,q)=q^2(1+x),\\
f_{3}^+(x,q)&=(1+3q+3q^2)(x+x^2),~f_{3}^-(x,q)=q^3+(3q+6q^2+4q^3)x+q^3x^2.
\end{align*}
In particular, $f_{n}^+(x,0)=d_n(x)$ and $f_{n}^-(x,0)=0$.

As the third main result of this paper, we now give an expansion of $d_n^B(x,q)$ in terms of the derangement polynomials.
\begin{theorem}\label{thm03}
For any $n\geq0$, we have
$$d_n^B(x,q)=\sum_{i=0}^n\sum_{j=0}^i\binom{n}{i}\binom{i}{j}d_{n-j}(x)q^i.$$
\end{theorem}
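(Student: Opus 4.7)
The plan is to prove the identity via manipulation of the exponential generating function (\ref{Dnbxq-EGF}), which is the shortest route since both sides of the claim are specified by EGFs that factor nicely.

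First I would observe that the EGF $\frac{(1-x)e^{qz}}{e^{(1+q)xz}-xe^{(1+q)z}}$ from (\ref{Dnbxq-EGF}) factors as
$$\sum_{n=0}^\infty d_n^B(x,q)\frac{z^n}{n!} \;=\; e^{qz}\cdot \frac{1-x}{e^{x\cdot(1+q)z}-xe^{(1+q)z}} \;=\; e^{qz}\cdot d(x,(1+q)z),$$
using the formula (\ref{dxz-EGF}) for the derangement EGF with $(1+q)z$ in place of $z$. This is the key structural observation: the type-$B$ $q$-derangement EGF decouples into a ``fixed-point factor'' $e^{qz}$ and a rescaled derangement EGF.

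Next I would extract the coefficient of $z^n/n!$ on the right. Writing $e^{qz}=\sum_{k\geq 0}q^k z^k/k!$ and $d(x,(1+q)z)=\sum_{m\geq 0}d_m(x)(1+q)^m z^m/m!$, the Cauchy product gives
$$d_n^B(x,q)\;=\;\sum_{m=0}^n\binom{n}{m}q^{\,n-m}(1+q)^{m}\,d_m(x).$$
Reindexing with $j=n-m$ yields $d_n^B(x,q)=\sum_{j=0}^n\binom{n}{j}q^{j}(1+q)^{n-j}d_{n-j}(x)$.

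The final step is to expand $(1+q)^{n-j}=\sum_{k=0}^{n-j}\binom{n-j}{k}q^k$, collect powers of $q$ by setting $i=j+k$, and apply the elementary identity
$$\binom{n}{j}\binom{n-j}{i-j}\;=\;\binom{n}{i}\binom{i}{j},$$
which produces exactly $\sum_{i=0}^n\sum_{j=0}^i\binom{n}{i}\binom{i}{j}d_{n-j}(x)\,q^i$, matching the claim. There is no real obstacle here; the only care needed is in the reindexing and in verifying the binomial identity, both of which are routine. A combinatorial proof (partitioning signed derangements by the size of the subset of positions contributing the $e^{qz}$ factor and removing them, then completing the remaining pattern to a derangement of length $n-j$) is also available, but the EGF argument above is the cleanest and fits the style of the surrounding results.
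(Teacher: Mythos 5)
Your proof is correct, and it takes a genuinely different route from the paper's. You work entirely at the level of generating functions: you factor the right-hand side of \eqref{Dnbxq-EGF} as $e^{qz}\,d(x,(1+q)z)$ using \eqref{dxz-EGF}, extract the coefficient of $z^n/n!$ by the Cauchy product to get $d_n^B(x,q)=\sum_{j=0}^n\binom{n}{j}q^j(1+q)^{n-j}d_{n-j}(x)$, and then expand $(1+q)^{n-j}$ and apply the identity $\binom{n}{j}\binom{n-j}{i-j}=\binom{n}{i}\binom{i}{j}$; all of these steps check out, and there is no circularity since \eqref{Dnbxq-EGF} is a specialization of Theorem~\ref{mainthm02}, which is proved by grammars independently of this theorem. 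The paper instead proceeds combinatorially: it writes $d_n^B(x,q)=\sum_i\binom{n}{i}d_{n,i}^B(x)q^i$, where $d_{n,i}^B(x)$ is the excedance polynomial over type $B$ derangements whose negative entries are exactly $\{\overline{n},\ldots,\overline{n-i+1}\}$, and establishes the recurrence $d_{n,i}^B(x)=d_{n,i-1}^B(x)+d_{n-1,i-1}^B(x)$ in \eqref{dniBx} by partitioning these derangements into three classes and exhibiting explicit excedance-preserving bijections $\phi_1,\phi_2,\phi_3$; the binomial formula then follows by iterating the recurrence. Your argument is shorter and essentially formal, whereas the paper's argument yields more structural information: the refined recurrence for $d_{n,i}^B(x)$ is singled out in the introduction as a result of independent interest, and the bijections explain combinatorially why the derangement polynomials $d_{n-j}(x)$ appear. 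If you wanted, you could flesh out the combinatorial alternative you sketch at the end into exactly the paper's argument, but as written your EGF proof stands on its own.
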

\begin{proof}
Let
$$d_n^B(x,q)=\sum_{i=0}^n\binom{n}{i}d_{n,i}^B(x)q^i.$$
Clearly, $d_{n,0}^B(x)=d_n(x)$ and $d_{n,n}(x)=A_n(x)$.
In the following discussion, we always write $\sigma\in \mdn_n^B$ in the cycle form.
Note that $$\binom{n}{i}d_{n,i}^B(x)=\sum_{\sigma\in \mdn_{n,i}^B}x^{\exc(\sigma)},$$
where $\mdn_{n,i}^B=\{\sigma\in \mdn_n^B: N(\sigma)=i\}$. For $1\leq i\leq n$,
let $\widetilde{\mdn}_{n,i}^B $ be the set of $\sigma\in\mdn_n^B$ with the restriction that
the set of negative entries of $\sigma$ is $\{\overline{n},\overline{n-1},\ldots,\overline{n-i+1}\}$.
Then we have $$d_{n,i}^B(x)=\sum_{\sigma\in \widetilde{\mdn}_{n,i}^B}x^{\exc(\sigma)}.$$
In order to show that $$d_{n,i}^B=\sum_{j=0}^i\binom{i}{j}d_{n-j}(x),$$
it suffices to show that for any $1\leq i\leq n$, we have
\begin{equation}\label{dniBx}
d_{n,i}^B(x)=d_{n,i-1}^B(x)+d_{n-1,i-1}^B(x).
\end{equation}

For any $1\leq i\leq n$, we partition the set $\widetilde{\mdn}_{n,i}^B$ into three subsets:
\begin{align*}
\widetilde{\mdn}_{n,i}^{B,1}&=\{\sigma\in\widetilde{\mdn}_{n,i}^B\mid \text{$\overline{n}$ is a singleton of $\sigma$}\},\\
\widetilde{\mdn}_{n,i}^{B,2}&=\{\sigma\in\widetilde{\mdn}_{n,i}^B\mid \st(\sigma)=0\},\\
\widetilde{\mdn}_{n,i}^{B,3}&=\{\sigma\in\widetilde{\mdn}_{n,i}^B\mid \st(\sigma)>0 ~\text{and $\overline{n}$ is not a singleton of $\sigma$}\}.
\end{align*}
{\it Claim 1.} There is a bijection $\phi_1:\widetilde{\mdn}_{n,i}^{B,1}\mapsto\widetilde{\mdn}_{n-1,i-1}^B$.
For any $\sigma\in\widetilde{\mdn}_{n,i}^{B,1}$, we define $\phi_1(\sigma)$ by deleting the cycle $(\overline{n})$ in $\sigma$.
Clearly, $\phi_1(\sigma)\in \widetilde{\mdn}_{n-1,i-1}^B$. For any $\sigma'\in\widetilde{\mdn}_{n-1,i-1}^{B}$,
the permutation $\phi_1^{-1}(\sigma')$ is obtained from $\sigma'$ by appending $(\overline{n})$ to $\sigma'$ as a new cycle.

{\it Claim 2.} There is an order-preserving bijection $\phi_2:\widetilde{\mdn}_{n,i}^{B,2}\mapsto \widetilde{\mdn}_{n,i-1}^{B,2}$.
For $\sigma\in\widetilde{\mdn}_{n,i}^{B,2}$, we define the map $\phi_2$ by
$$\phi_2(\sigma)(j)=\left\{\begin{array}{lll}
\sigma(j)+1,&\text{ if $\sigma(j)\in \{1,2,\ldots,n-i\}$;}\\
1,&\text{if $\sigma(j)=\overline{n-i+1}$;}\\
\sigma(j),&\text{if $\sigma(j)\in \{\overline{n-i+2},\ldots,\overline{n-1},\overline{n}\}$.}
\end{array}\right.$$
It is clear that $\phi_2(\sigma)\in \widetilde{\mdn}_{n,i-1}^{B,2}$ and $\exc(\sigma)=\exc(\phi_2(\sigma))$.
For $\sigma'\in \widetilde{\mdn}_{n,i-1}^{B,2}$, the converse of $\phi_2$ is given as follows:
$$\phi_2^{-1}(\sigma')(j)=\left\{\begin{array}{lll}
\sigma(j)-1,&\text{ if $\sigma(j)\in \{2,3,\ldots,n-i+1\}$;}\\
\overline{n-i+1},&\text{if $\sigma'(j)=1$;}\\
\sigma(j),&\text{if $\sigma(j)\in \{\overline{n-i+2},\ldots,\overline{n-1},\overline{n}\}$.}
\end{array}\right.$$

{\it Claim 3.} There is an order-preserving bijection $\phi_3:\widetilde{\mdn}_{n,i}^{B,3}\mapsto\widetilde{\mdn}_{n,i-1}^B\setminus \widetilde{\mdn}_{n,i-1}^{B,2}$.
For $\sigma\in \widetilde{\mdn}_{n,i}^{B,3}$,
let $\ST(\sigma)$ be the set of singletons of $\sigma$.
We let the set of singletons of $\phi_3(\sigma)$ be defined by
$$\ST\left(\phi_3(\sigma)\right)=\{\overline{k+1}:\overline{k}\in\ST(\sigma)\}.$$

Define
\begin{align*}
\mathcal{A}(\sigma)&=\{\overline{n},\overline{n-1},\ldots,\overline{n-i+1}\}\cup\{1,2,\ldots,n-i\}\setminus \ST(\sigma),\\
\mathcal{B}(\sigma)&=\{\overline{n},\overline{n-1},\ldots,\overline{n-i+2}\}\cup\{1,2,\ldots,n-i,n-i+1\}\setminus \ST\left(\phi_3(\sigma)\right).
\end{align*}
We write the elements in $\mathcal{A}(\sigma)$ and $\mathcal{B}(\sigma)$ in increasing order.
If $\sigma(j)$ is the $k$th element of $\mathcal{A}(\sigma)$, then let
$\phi_3(\sigma)(j)$ be the $k$th element of $\mathcal{B}(\sigma)$.

For example, let $\sigma=(1,4,3,\overline{9},\overline{8})(2,5)(\overline{6})(\overline{7})\in\widetilde{\mdn}_{9,4}^{B,3}$.
Then $\ST(\sigma)=\{\overline{6},\overline{7}\}$ and $\ST(\phi_3(\sigma))=\{\overline{7},\overline{8}\}$.
Moreover,
$\mathcal{A}(\sigma)=\{\overline{9},\overline{8},1,2,3,4,5\},~
\mathcal{B}(\sigma)=\{\overline{9},1,2,3,4,5,6\}$.
Then the order-preserving between $\mathcal{A}(\sigma)$ and $\mathcal{B}(\sigma)$ can be illustrated by the following array:
$$\left(
    \begin{array}{ccccccc}
      \overline{9} & \overline{8} & 1 & 2 & 3 & 4 & 5 \\
      \overline{9} & 1 & 2 & 3 & 4 & 5 & 6 \\
    \end{array}
  \right).
$$
Therefore, $\phi_3(\sigma)=(2,5,4,\overline{9},1)(3,6)(\overline{7})(\overline{8})$.
It is clear that $\phi_3(\sigma)$ has at least one singleton.
Along the same way lines, one can define the reverse of $\phi_3$. It should be noted that the order-preserving bijection $\phi_3$ does not change the number of excedances.

In conclusion, we have
\begin{align*}
\sum_{\sigma\in \widetilde{\mdn}_{n,i}^B}x^{\exc(\sigma)}&=\sum_{\sigma\in \widetilde{\mdn}_{n,i}^{B,1}}x^{\exc(\sigma)}
+\sum_{\sigma\in \widetilde{\mdn}_{n,i}^{B,2}}x^{\exc(\sigma)}+\sum_{\sigma\in \widetilde{\mdn}_{n,i}^{B,3}}x^{\exc(\sigma)}\\
&=\sum_{\sigma\in \widetilde{\mdn}_{n-1,i-1}^{B}}x^{\exc(\sigma)}+\sum_{\sigma\in \widetilde{\mdn}_{n,i-1}^{B}}x^{\exc(\sigma)},
\end{align*}
and this leads to~\eqref{dniBx}. This completes the proof.
\end{proof}

Let $\NS_n=\{\pi\in\msn: \suc(\pi)=0\}$. From~\eqref{Pnx-Pnx}, we see that
that $$xP_n(x)=\sum_{\pi\in\NS_n}x^{\asc(\pi)+1}=d_n(x)+xd_{n-1}(x).$$
Since $x^nd_n(1/x)=d_n(x)$, we get $$\sum_{\pi\in\NS_n}x^{\des(\pi)}=d_n(x)+d_{n-1}(x).$$
A special case of Theorem~\ref{thm03} says that $d_{n,1}^B=d_n(x)+d_{n-1}(x)$. 

The rest of this paper is organized as follows. In Section~\ref{Section02}--\ref{Section05},
we shall prove Theorem~\ref{mainthm01}, Theorem~\ref{mainthm02}, Theorem~\ref{Cxzthm} and Theorem~\ref{theoremdnbx}, respectively.
\section{Proof of Theorem~\ref{mainthm01}}\label{Section02}
The main tool of the proof is context-free grammar.
Let $V$ be an alphabet whose letters are regarded as independent commutative
indeterminates. Following Chen~\cite{Chen93},
a {\it context-free grammar} $G$ over $V$ is a set of substitution rules replacing a variable in $V$ by
a formal function of variables in $V$.
The formal derivative $D:=D_G$ with respect to $G$ is defined as a linear operator
such that each substitution rule is treated as the common differential rule. For two formal functions $u$ and $v$,
we have $D(u+v)=D(u)+D(v)$ and $D(uv)=D(u)v+uD(v)$.
For a constant $c$, we have $D(c)=0$.
For a Laurent polynomial $w$ of variables in $V$, let $$\gen(w;z)=\sum_{n=0}^\infty D^n(w)\frac{z^n}{n!}.$$
Then we have $$\gen(uv;z)=\gen(u;z)\gen(v;z),$$
$$\frac{\partial}{\partial z}\gen(u;z)=\gen(D(u);z).$$

The following two definitions will be used repeatedly in our discussion.
\begin{definition}[{\cite{Chen17,Fu18}}]
A {\it grammatical labeling} is an assignment of the underlying elements of a combinatorial structure
with variables, which is consistent with the substitution rules of a grammar.
\end{definition}
\begin{definition}[{\cite{Ma19}}]
A {\it change of grammar} is a substitution method in which the original grammar is replaced with functions of other grammar.
\end{definition}

\begin{lemma}\label{lemmaLM}
If $V=\{L,M,s,x,y\}$ and
\begin{equation}\label{LMxys-G}
G=\{L\rightarrow Ly,M\rightarrow Ms,s\rightarrow xy,x\rightarrow xy,y\rightarrow xy\},
\end{equation}
then we have
\begin{equation}\label{derangment-grammarLMA}
D_{G}^n(LM)=LMA_{n+1}(x,y,s).
\end{equation}
\end{lemma}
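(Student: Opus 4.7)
The plan is to prove the lemma by induction on $n$ via a grammatical labeling of $\msnn$ that is compatible with the insertion of the new maximum entry $n+2$.

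Given $\pi\in\msnn$ with $\pi(k)=n+1$, I label the $n+2$ slots of $\pi$ (the slot before $\pi(1)$, the $n$ internal gaps, and the slot after $\pi(n+1)$) as follows. The leftmost slot receives $L$; the slot immediately to the right of $\pi(k)=n+1$ receives $M$ (this slot is internal when $k\le n$ and is the rightmost slot when $k=n+1$); each remaining internal gap receives the letter $x$, $y$, or $s$ according as the gap is a big ascent, a descent, or a succession; and if $k\le n$, the rightmost slot receives the auxiliary label $y$. Let $\mathcal{L}(\pi)$ denote the product of all these labels. A direct bookkeeping gives $\mathcal{L}(\pi)=LM\,x^{\basc(\pi)}y^{\des(\pi)}s^{\suc(\pi)}$, because the internal gap replaced by $M$ (in the case $k\le n$) is always a descent, and the lost $y$ is exactly restored by the auxiliary $y$ placed at the rightmost slot. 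Summing over $\pi$ therefore yields $\sum_{\pi\in\msnn}\mathcal{L}(\pi)=LM\,A_{n+1}(x,y,s)$.

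For the inductive step, I show that $D_G(\mathcal{L}(\pi))=\sum_\sigma \mathcal{L}(\sigma)$, where $\sigma$ ranges over the $n+2$ permutations in $\mathfrak{S}_{n+2}$ obtained from $\pi$ by inserting $n+2$ into one of its $n+2$ slots. The matching between grammar rules and insertions is as follows: the rule $L\to Ly$ corresponds to inserting $n+2$ at the leftmost slot (creating a new descent with $\pi(1)$); the rule $M\to Ms$ corresponds to inserting $n+2$ immediately after $\pi(k)=n+1$ (creating the succession $(n+1,n+2)$); the rules $x\to xy$, $y\to xy$ and $s\to xy$ correspond to insertions at non-$M$ internal gaps, where the old gap is split into a big ascent followed by a descent so the letter is replaced by $xy$. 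In the case $k\le n$ there is one remaining insertion site, the rightmost slot of $\pi$, and since $\pi(n+1)\ne n+1$ the new last gap is a big ascent; this is captured by applying $y\to xy$ to the auxiliary $y$-label placed at the rightmost slot. In each case a direct comparison confirms that the resulting monomial is precisely $\mathcal{L}(\sigma)$ under the relabeling of $\sigma\in\mathfrak{S}_{n+2}$ relative to its new maximum $n+2$.

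The base case $n=0$ is immediate since $D_G^0(LM)=LM$ and $A_1(x,y,s)=1$. Combining the base case with the rule-by-rule matching above and induction on $n$ yields \eqref{derangment-grammarLMA}. The main technical obstacle is the last case analysed above: verifying that the right-end insertion when $n+1$ is internal to $\pi$ is genuinely encoded by the rule $y\to xy$ acting on the artificial right-end label, which requires carefully tracking how the label $M$ migrates to the slot after the new maximum $n+2$ in $\sigma$ and how the auxiliary $y$ at the right end of $\pi$ is promoted to a genuine big-ascent label of $\sigma$.
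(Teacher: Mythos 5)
Your proposal is correct and follows essentially the same route as the paper: the same grammatical labeling ($L$ at the front, $M$ after the maximum, $x/y/s$ on big-ascent/descent/succession gaps, and the auxiliary $y$ at the right end when the maximum is not last), with the inductive step realized by matching each substitution rule of $G$ to an insertion site for the new maximum and tracking the migration of the label $M$. The only difference is cosmetic (you phrase the labels as attached to slots rather than to entries), so no further comparison is needed.
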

\begin{proof}
We now introduce a grammatical labeling of $\pi=\pi(1)\pi(2)\cdots\pi(n)\in \msn$ as follows:
\begin{itemize}
\item [\rm ($i$)]Put a superscript label $L$ at the front of $\pi$;
\item [\rm ($ii$)]Put a superscript label $M$ right after the maximum entry $n$;
  \item [\rm ($iii$)]If $i$ is a big ascent, then put a superscript label $x$ right after $\pi(i)$;
 \item [\rm ($iv$)]If $i$ is a descent and $\pi(i)\neq n$, then put a superscript label $y$ right after $\pi(i)$;
  \item [\rm ($v$)]If $\pi(n)\neq n$, then put a superscript label $y$ at the end of $\pi$;
\item [\rm ($vi$)]If $i$ is a succession, then put a superscript label $s$ right after $\pi(i)$.
\end{itemize}
The weight of $\pi$ is defined to be the product of its labels.
Note that the weight of $\pi$ is given by $$w(\pi)=LMx^{\basc(\pi)}y^{\des(\pi)}s^{\suc(\pi)}.$$
When $n=0,1$, we have $\ms_1=\{^L1^M\}$ and $\ms_2=\{^L1^s2^M, ^L2^M1^y\}$. Note that $D_{G}(LM)=LM(s+y)$.
Hence the weight of the element in $\ms_1$ is $LM$ and the sum of weights of the elements in $\ms_2$ is given by $D_{G}(LM)$.
Suppose we get all labeled permutations in $\pi\in\ms_{n-1}$, where $n\geq 2$. Let
$\widehat{{\pi}}$ be obtained from $\pi\in\ms_{n-1}$ by inserting the entry $n$.
There are six cases to label $n$ and relabel some elements of $\pi$.
The changes of labeling are illustrated as follows:
$$ ^L\pi(1)\cdots (n-1)^M\cdots   \mapsto  ^Ln^M\pi(1)\cdots (n-1)^y\cdots ;$$
$$ ^L\pi(1)\cdots (n-1)^M\cdots   \mapsto ^L\pi(1)\cdots (n-1)^sn^M\cdots;$$
$$ \cdots\pi(i)^x\cdots (n-1)^M\cdots   \mapsto \cdots\pi(i)^xn^M\cdots (n-1)^y\cdots ;$$
$$ \cdots\pi(i)^y\pi(i+1)\cdots(n-1)^M\cdots   \mapsto \cdots\pi(i)^xn^M\pi(i+1)\cdots(n-1)^y\cdots  ;$$
$$ \cdots (n-1)^M\cdots\pi(n-1)^y   \mapsto  \cdots (n-1)^y\cdots\pi(n-1)^xn^M  ;$$
$$ \cdots \pi(i)^s\pi(i+1)\cdots(n-1)^M\cdots   \mapsto \cdots \pi(i)^xn^M\pi(i+1)\cdots (n-1)^y\cdots.$$
In each case, the insertion of $n$ corresponds to one substitution rule in $G$. By induction,
it is routine to check that the action of $D_G$ on elements of $\ms_{n-1}$ generates all elements of $\ms_{n}$.
\end{proof}

\noindent{\bf A proof of
Theorem~\ref{mainthm01}:}
\begin{proof}
(A) Let $G$ be the grammar given by~\eqref{LMxys-G}.
By using~\ref{derangment-grammarLMA}, we see that there exists nonnegative integers $a_{n,i,j}$ such that
$$D_{G}^n(LM)=LM\sum_{i,j=0}^na_{n,i,j}x^iy^js^{n-i-j}.$$
Note that
\begin{align*}
&D_{G}\left(D_{G}^n(LM)\right)\\
&=LM\sum_{i,j=0}^na_{n,i,j}\left(x^iy^{j+1}s^{n-i-j}+x^iy^js^{n+1-i-j}\right)+\\
&LM\sum_{i,j=0}^na_{n,i,j}\left(ix^iy^{j+1}s^{n-i-j}+jx^{i+1}y^{j}s^{n-i-j}+(n-i-j)x^{i+1}y^{j+1}s^{n-1-i-j}\right).
\end{align*}
Comparing the coefficients of $LMx^iy^js^{n+1-i-j}$ in both sides of the above expansion, we get
that 
\begin{equation}\label{anij-recu}
a_{n+1,i,j}=a_{n,i,j}+(1+i)a_{n,i,j-1}+ja_{n,i-1,j}+(n-i-j+2)a_{n,i-1,j-1}.
\end{equation}
Multiplying both sides of~\eqref{anij-recu} by $x^iy^js^{n+1-i-j}$ and summing over all $i,j$, we obtain
\begin{equation}\label{Anxys-recu}
A_{n+2}(x,y,s)=(s+y)A_{n+1}(x,y,s)+xy\left(\frac{\partial}{\partial x}+\frac{\partial}{\partial y}+\frac{\partial}{\partial s}\right)A_{n+1}(x,y,s).
\end{equation}
By rewriting~\eqref{Anxys-recu} in terms of generating function $A:=A(x,y,s;z)$, we have
\begin{equation}\label{recu-Anxys02}
\frac{\partial}{\partial z}A=(s+y)A+xy\left(\frac{\partial}{\partial x}+\frac{\partial}{\partial y}+\frac{\partial}{\partial s}\right)A.
\end{equation}
It is routine to check that the generating function
$$\widetilde{A}=e^{z(y+s)}\left(\frac{y-x}{ye^{xz}-xe^{yz}}\right)^2$$
satisfies~\eqref{recu-Anxys02}. Also, this generating function gives $\widetilde{A}(0,0,0;z)=1,\widetilde{A}(x,0,s;z)=e^{sz}$ and $\widetilde{A}(0,y,s;z)=e^{z(y+s)}$. Hence $A=\widetilde{A}$.

\quad (B)
Setting
$u=xy,v=x+y,t=s+y$ and $I=LM$,
we get
$D_G(u)=uv,D_G(v)=2u,D_G(t)=2u$ and $D_G(I)=It$.
Then a change of the grammar $G$ is given as follows:
\begin{equation}\label{JAcobi-gram02}
G_1=\{I\rightarrow It,t\rightarrow 2u, u\rightarrow uv,v\rightarrow 2u\}.
\end{equation}
Note that $D_{G_1}(I)=It,~D_{G_1}^2(I)=I(t^2+2u)$ and $D_{G_1}^3(I)=I(t^3+6tu+2uv)$.
Then by induction, it is easy to verify that there exist nonnegative integers $\gamma_{n,i,j}$ such that
\begin{equation}\label{DG101}
D_{G_1}^n(I)=I\sum_{i=0}^nt^i\sum_{j=0}^{\lrf{(n-i)/2}}2^j\gamma_{n,i,j}u^jv^{n-i-2j}.
\end{equation}
Then upon taking $u=xy,v=x+y,t=s+y$ and $I=LM$, we get~\eqref{Anxys-gamma}.
In particular, $\gamma_{0,0,0}=1$.
Since $D_{G_1}^{n+1}(I)=D_{G_1}\left(D_{G_1}^n(I)\right)$,
we get
\begin{align*}
&D_{G_1}\left(D_{G_1}^n(I)\right)\\
&=I\sum_{i,j}2^j\gamma_{n,i,j}\left(t^{i+1}u^jv^{n-i-2j}+2it^{i-1}u^{j+1}v^{n-i-2j}\right)+\\
&I\sum_{i,j}2^j\gamma_{n,i,j}\left(jt^iu^jv^{n+1-i-2j}+2(n-i-2j)t^iu^{j+1}v^{n-1-i-2j}\right).
\end{align*}
Comparing the coefficients of $2^jt^iu^jv^{n+1-i-2j}$ in both sides of the above expansion, we get~\eqref{gammanij-recu}.

\quad (C) Now we prove~\eqref{gammanij-comb}. We write any permutation in $\rss_n$ by using its cycle form.
In order to get a permutation $\pi'\in\rss_{n+1}$ with $i$ fixed points and $j$ excedances from a permutation $\pi\in\rss_{n}$,
we distinguish four cases:
\begin{enumerate}
\item [($c_1$)] If $\pi\in \rss_n$ and $\fix(\pi)=i-1$ and $\exc(\pi)=j$, then we need append $(n+1)$ to $\pi$ as a new cycle. This accounts for $\gamma_{n,i-1,j}$ possibilities;
\item [($c_2$)] If $\pi\in \rss_n$ and $\fix(\pi)=i+1$ and $\exc(\pi)=j-1$, then we should insert the entry $n+1$ right after a fixed point. This accounts for $(1+i)\gamma_{n,i+1,j-1}$ possibilities;
 \item [($c_3$)] If $\pi\in \rss_n$ and $\fix(\pi)=i$ and $\exc(\pi)=j$, then we should insert the entry $n+1$ right after an excedance.
This accounts for $j\gamma_{n,i,j}$ possibilities;
 \item [($c_4$)] Since $\pi\in\rss_n$ has no cycle double ascents, we say that $\pi(i)$ is a {\it cycle peak} if $i$ is an excedance, i.e. $i<\pi(i)$. If $\pi\in \rss_n$ and $\fix(\pi)=i$ and $\exc(\pi)=j-1$, then there are $n-i-2(j-1)$ positions could be inserted the entry $n+1$, since
we cannot insert $n+1$ immediately before or right after each cycle peak of $\pi$. Moreover, we cannot insert $n+1$ right after a fixed point.
This accounts for $(n-i-2j+2)\gamma_{n,i,j-1}$ possibilities.
 \end{enumerate}
Thus the claim~\eqref{gammanij-comb} holds. This completes the proof.
\end{proof}
\section{Proof of Theorem~\ref{mainthm02}}\label{Section03}
\hspace*{\parindent}
In the following discussion, we always write permutation, signed or not, by its standard cycle form,
in which each cycle has its smallest (in absolute value) element first and the cycles are written in increasing order of the absolute value of their first elements.
To prove Theorem~\ref{mainthm02}, we need the following lemma.
\begin{lemma}\label{lemmaBn}
If $V=\{J,s,t,x,y\}$ and
\begin{equation}\label{Ixyst-G}
G_2=\{J\rightarrow pJ(s+qt),s\rightarrow (1+q)xy,t\rightarrow (1+q)xy,x\rightarrow (1+q)xy,y\rightarrow (1+q)xy\},
\end{equation}
then we have
\begin{equation}\label{derangment-grammar022}
D_{G_2}^n(J)=JB_n(x,y,s,t,p,q).
\end{equation}
\end{lemma}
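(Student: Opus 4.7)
The plan is to mirror the inductive, grammatical-labeling argument used in the proof of Lemma~\ref{lemmaLM}. I would proceed by induction on $n$, with the base case $n=0$ being immediate since $D_{G_2}^0(J)=J$ and $B_0(x,y,s,t,p,q)=1$. For the inductive step I would introduce a labeling of each $\sigma\in B_n$ whose total weight is $J\cdot B_n(x,y,s,t,p,q)$ and then check that the action of $D_{G_2}$ corresponds bijectively to inserting the largest letter $\pm(n+1)$ into $\sigma$ to generate all of $B_{n+1}$.

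Concretely, given $\sigma\in B_n$ written in its standard signed cycle form, I place a $J$ at the front, a factor $p$ on each paired or self-paired cycle, and for every position $i\in[n]$ a letter recording its status under $\sigma$: the letter $s$ if $\sigma(i)=i$, the letter $qt$ if $\sigma(i)=\overline{i}$, the letter $x$ or $qx$ if $i$ is an excedance (the $q$ appearing precisely when $\sigma(i)<0$), and $y$ or $qy$ if $i$ is an anti-excedance (similarly). By construction the product of all labels attached to $\sigma$ equals $Jp^{\cyc(\sigma)}s^{\fix(\sigma)}t^{\ston(\sigma)}x^{\exc(\sigma)}y^{\aexc(\sigma)}q^{N(\sigma)}$, so summing over $\sigma\in B_n$ gives $J\cdot B_n(x,y,s,t,p,q)$.

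For the inductive step I would match each substitution rule of $G_2$ to a class of insertions of $\pm(n+1)$. The rule $J\to pJ(s+qt)$ encodes creating a new cycle containing either $n+1$ (a new fixed point, contributing $ps$) or $\overline{n+1}$ (a new singleton, contributing $pqt$). Each of the four rules $s,t,x,y\to(1+q)xy$ encodes inserting $n+1$ or $\overline{n+1}$ into an existing cycle in the manner dictated by the substituted label; the $(1+q)$ factor records the two sign choices for the inserted element, while the new $xy$ reflects that exactly one label of $\sigma$ is replaced and position $n+1$ acquires a single fresh label, collectively producing one $x$-letter and one $y$-letter.

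The principal obstacle is to describe the insertion procedure on signed cycles correctly and check the bijection with $B_{n+1}$, especially when a negative insertion merges a paired cycle with its $i\leftrightarrow -i$ partner into a self-paired cycle; for instance, the negative sub-case of $s\to(1+q)xy$ applied to a fixed-point cycle $(a)$ should produce a self-paired $4$-cycle whose signed representatives are $a,\overline{n+1},-a,n+1$, rather than a paired cycle. Once all eight cases (four host labels times two sign choices) are enumerated and the updated labels computed, one verifies that the change in the monomial $x^{\exc(\sigma)}y^{\aexc(\sigma)}s^{\fix(\sigma)}t^{\ston(\sigma)}p^{\cyc(\sigma)}q^{N(\sigma)}$ matches exactly the substitution performed, and that each $\tau\in B_{n+1}$ arises from precisely one triple (host $\sigma$, label, sign). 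This completes the induction and yields $D_{G_2}^n(J)=JB_n(x,y,s,t,p,q)$.
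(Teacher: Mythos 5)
Your proposal is correct and follows essentially the same route as the paper: a grammatical labeling of signed permutations in cycle form whose total weight is $J\,B_n(x,y,s,t,p,q)$, with each rule of $G_2$ matched to the two signed insertions of the largest letter (a new cycle for $J\rightarrow pJ(s+qt)$, and insertion right after a fixed point, singleton, excedance or anti-excedance for the other four rules, the factor $1+q$ recording the sign choice). The only cosmetic difference is that the paper labels the entries of the compact signed cycle word rather than the positions, so the cycle count is visibly unchanged under insertion and the paired-versus-self-paired merging you single out as the principal obstacle never needs to be addressed.
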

\begin{proof}
We first introduce a grammatical labeling of $\sigma\in B_n$ as follows:
\begin{itemize}
  \item [\rm ($L_1$)]If $i$ is an excedance, then put a superscript label $x$ right after $\sigma(i)$;
 \item [\rm ($L_2$)]If $i$ is a anti-excedance, then put a superscript label $y$ right after $\sigma(i)$;
\item [\rm ($L_3$)]If $i$ is a fixed point, then put a superscript label $s$ right after $i$;
\item [\rm ($L_4$)]If $i$ is a singleton, then put a superscript label $t$ right after $i$;
\item [\rm ($L_5$)]Put a superscript label $J$ at the end of $\sigma$;
\item [\rm ($L_6$)]Put a subscript label $p$ at the end of each cycle of $\sigma$;
\item [\rm ($L_7$)]Put a subscript label $q$ right after each negative entry of $\sigma$.
\end{itemize}
For example, let $\sigma=(1,3,\overline{2},6)(\overline{4})(5)$.
The grammatical labeling of $\sigma$ is given below:
$$(1^x3^y\overline{2}^x_q6^y)_p(\overline{4}^t_q)_p(5^s)_p^J.$$
Note that the weight of $\sigma$ is given by $w(\sigma)=Jx^{\exc(\sigma)}y^{\aexc(\sigma)}s^{\fix(\sigma)}t^{\ston(\sigma)}p^{\cyc(\sigma)}q^{N(\sigma)}$.
We proceed by induction on $n$.
For $n=1$, we have $B_1=\{(1^s)^J_p,(\overline{1}^t_q)^J_p\}$.
Note that $D_{G_2}(J)=pJ(s+qt)$.
Hence the result holds for $n=1$. Assume that the result holds for $n$.
Suppose we get all labeled permutations in $\sigma\in B_{n-1}$, where $n\geq 2$. Let
$\widehat{{\sigma}}$ be obtained from $\sigma\in B_{n-1}$ by inserting the entry $n$ or $\overline{n}$.
There are five cases to label the inserted element and relabel some elements of $\sigma$:
\begin{itemize}
  \item [\rm ($c_1$)]If $n$ or $\overline{n}$ appear a new cycle, then the changes of labeling are illustrated as follows:
$$\cdots(\cdots)_p^J\rightarrow\cdots(\cdots)_p(n^s)_p^J,\quad \cdots(\cdots)^J\rightarrow\cdots(\cdots)_p(\overline{n}_q^t)_p^J;$$
 \item [\rm ($c_2$)]If we insert $n$ or $\overline{n}$ right after a fixed point, then the changes of labeling are illustrated as follows:
$$\cdots(i^s)_p(\cdots)\cdots\rightarrow\cdots(i^xn^y)_p(\cdots)\cdots,\quad \cdots(i^s)_p(\cdots)\cdots\rightarrow\cdots(i^y\overline{n}_q^x)_p(\cdots)\cdots;$$
\item [\rm ($c_3$)]If we insert $n$ or $\overline{n}$ right after a singleton, then the changes of labeling are illustrated as follows:
$$\cdots(\overline{i}^t_q)_p(\cdots)\cdots\rightarrow\cdots(\overline{i}^x_qn^y)_p(\cdots)\cdots,\quad \cdots(\overline{i}^t_q)_p(\cdots)\cdots\rightarrow\cdots(\overline{i}^y_q\overline{n}_q^x)_p(\cdots)\cdots;$$
\item [\rm ($c_4$)]If we insert $n$ or $\overline{n}$ right after an excedance, then the changes of labeling are illustrated as follows:
$$\cdots(\cdots \sigma(i)^x\sigma(|\sigma(i)|)\cdots)_p(\cdots)\cdots\rightarrow\cdots(\cdots \sigma(i)^xn^y\sigma(|\sigma(i)|)\cdots)_p(\cdots)\cdots,$$
$$\cdots(\cdots \sigma(i)^x\sigma(|\sigma(i)|)\cdots)_p(\cdots)\cdots\rightarrow\cdots(\cdots \sigma(i)^y\overline{n}_q^x\sigma(|\sigma(i)|)\cdots)_p(\cdots)\cdots;$$
\item [\rm ($c_5$)]If we insert $n$ or $\overline{n}$ right after an anti-excedance, then the changes of labeling are illustrated as follows:
$$\cdots(\cdots \sigma(i)^y\sigma(|\sigma(i)|)\cdots)_p(\cdots)\cdots\rightarrow\cdots(\cdots \sigma(i)^xn^y\sigma(|\sigma(i)|)\cdots)_p(\cdots)\cdots,$$
$$\cdots(\cdots \sigma(i)^y\sigma(|\sigma(i)|)\cdots)_p(\cdots)\cdots\rightarrow\cdots(\cdots \sigma(i)^y\overline{n}_q^x\sigma(|\sigma(i)|)\cdots)_p(\cdots)\cdots.$$
\end{itemize}
In each case, the insertion of $n$ or $\overline{n}$ corresponds to one substitution rule in $G$. By induction,
it is routine to check that the action of $D_{G_2}$ on elements of $B_{n-1}$ generates all elements of $B_{n}$.
\end{proof}

\noindent{\bf A proof of~\eqref{Bxy-EGF}:}
\begin{proof}
Dumont~{\cite[Section~2.1]{Dumont96}} found that
if $V=\{x,y\}$ and
$G=\{x\rightarrow xy, y\rightarrow xy\}$,
then
\begin{equation}\label{Dumont}
D_G^n(x)=\sum_{\pi\in\msn}x^{\exc(\pi)+1}y^{n-\exc(\pi)}
\end{equation} for $n\ge 1$.
From~\eqref{Ixyst-G}, we see that $D_{G_2}(s+qt)=(1+q)^2xy$ and $D_{G_2}(xy)=(1+q)xy(x+y)$. By using~\eqref{Dumont}, it is routine to verify that
\begin{equation*}
D_{G_2}^n(s+qt)=(1+q)^{n+1}\sum_{\pi\in\msn}x^{\exc(\pi)+1}y^{n-\exc(\pi)}
\end{equation*}
for $n\geq 1$,
Then combining this with~\eqref{Ankx-deff}, we find that
\begin{equation*}
\gen(s+qt;z)=\sum_{n=0}^\infty D_{G_2}^n(s+qt)\frac{z^n}{n!}=s+qt+(1+q)xy\frac{e^{(1+q)yz}-e^{(1+q)xz}}{ye^{(1+q)xz}-xe^{(1+q)yz}}.
\end{equation*}
By the {\it Leibniz's rule}, we get
$$D_{G_2}^{n+1}(J)=pD_{G_2}^{n}\left(J(s+qt)\right)=p\sum_{i=0}^n\binom{n}{i}D_{G_2}^{i}(J)D_{G_2}^{n-i}(s+qt).$$
Equivalently, we have
\begin{equation}\label{GenBnxtz}
\frac{\partial}{\partial z}\gen(J;z)=p\gen(J;z)\gen(s+qt;z).
\end{equation}
It is routine to verify that
\begin{align*}
\gen(J;z)&=J\left(\frac{(y-x)e^{z(s+qt)}}{ye^{(1+q)xz}-xe^{(1+q)yz}}\right)^p,
\end{align*}
since this explicit formula satisfies~\eqref{GenBnxtz} and $\gen(J;0)=J$. This completes the proof.
\end{proof}

\noindent{\bf A proof of~\eqref{Bnxyspq-gamma}:}
\begin{proof}
Setting
$u=xy,v=x+y,h=s+qt$,
we get
$D_{G_2}(J)=pJh,D_{G_2}(h)=(1+q)^2u,D_{G_2}(u)=(1+q)uv$ and $D_{G_2}(v)=2(1+q)u$.
Then a change of the grammar $G_2$ is given as follows:
\begin{equation}\label{Change-Bn}
G_3=\{J\rightarrow pJh,h\rightarrow (1+q)^2u, u\rightarrow (1+q)uv,v\rightarrow 2(1+q)u\}.
\end{equation}
Note that $D_{G_3}(J)=pJh,~D_{G_3}^2(J)=J\left(p^2h^2+p(1+q)^2u\right)$ and $$D_{G_3}^3(J)=J\left(p^3h^3+3p^2(1+q)^2hu+p(1+q)^3uv\right).$$
Then by induction, it is routine to verify that there exist polynomials $b_{n,i,j}(q)$ such that
\begin{equation}\label{DG3I}
D_{G_3}^n(J)=J\sum_{i=0}^n(1+q)^{n-i}h^i\sum_{j=0}^{\lrf{(n-i)/2}}b_{n,i,j}(p)u^jv^{n-i-2j}.
\end{equation}
Therefore, applying the operator $D_{G_3}$ to both sides of the above expansion yields
\begin{align*}
D_{G_3}^{n+1}(J)&=D_{G_3}\left(J\sum_{i,j}b_{n,i,j}(p)(1+q)^{n-i}h^iu^jv^{n-i-2j}\right)\\
&=J\sum_{i,j}b_{n,i,j}(p)(1+q)^{n-i}\left(ph^{i+1}u^jv^{n-i-2j}+(1+q)^2ih^{i-1}u^{j+1}v^{n-i-2j}\right)+\\
&J\sum_{i,j}b_{n,i,j}(p)(1+q)^{n+1-i}\left(jh^iu^jv^{n+1-i-2j}+2(n-i-2j)h^iu^{j+1}v^{n-1-i-2j}\right).
\end{align*}
Comparing the coefficients of $I(1+q)^{n+1-i}h^iu^jv^{n+1-i-2j}$ on both sides of the above expansion leads to~\eqref{bnij-recu}.
Then in~\eqref{DG3I}, upon taking $u=xy,v=x+y,h=s+qt$, we get~\eqref{Bnxyspq-gamma}.
In particular, $b_{0,0,0}(p)=1,~b_{1,1,0}(p)=p$ and $b_{1,i,j}(p)=0$ if $(i,j)\neq (1,0)$.
This completes the proof.
\end{proof}

Let $(c_1,c_2,\ldots,c_i)$ be a cycle of $\pi$. Then $c_1=\min\{c_1,\ldots,c_i\}$. Set $c_{i+1}=c_1$. Then $c_j$ is called
\begin{itemize}
  \item a {\it cycle double ascent} in the cycle if $c_{j-1}<c_j<c_{j+1}$, where $2\leq j\leq i-1$;
  \item a {\it cycle double descent} in the cycle if $c_{j-1}>c_j>c_{j+1}$, where $2<j\leq i$;
 \item a {\it cycle peak} in the cycle if $c_{j-1}<c_j>c_{j+1}$, where $2\leq j \leq i$;
 \item a {\it cycle valley} in the cycle if $c_{j-1}>c_j<c_{j+1}$, where $2<j\leq i-1$.
\end{itemize}
We define an action $\varphi_{x}$ on $\msn$ as follows.
Let $c=(c_1,c_2,\ldots,c_i)$ be a cycle of $\pi\in \msn$ with at least two elements.
Consider the following three cases:
\begin{itemize}\item  If $c_k$ is a cycle double ascent in $c$,
then $\varphi_{c_k}(\pi)$ is obtained by deleting $c_k$ and then inserting $c_k$ between $c_j$ and $c_{j+1}$, where $j$ is the smallest index satisfying $k< j\leq i$ and $c_j>c_k>c_{j+1}$;
\item If $c_k$ is a cycle double descent in $c$, then $\varphi_{c_k}(\pi)$ is obtained by deleting $c_k$ and then inserting $c_k$ between $c_j$ and $c_{j+1}$, where $j$ is the largest index satisfying $1\leq j<k$ and $c_j<c_k<c_{j+1}$;
\item If $c_k$ is neither a cycle double ascent nor a cycle double descent in $c$, then $c_k$ is a cycle peak or a cycle valley.
In this case, we let $\varphi_{c_k}(\pi)=\pi$.
\end{itemize}

Following~\cite{Branden08}, we now define a {\it modified Foata-Strehl group action} $\varphi'_x$ on $\msn$ by
$$\varphi'_x(\pi)=\left\{\begin{array}{lll}
\varphi_x(\pi),&\text{ if $x$ is a cycle double ascent or a cycle double descent;}\\
\pi,&\text{if $x$ is a cycle peak or a cycle valley.}\\
\end{array}\right.$$

Define
$$\CDD(\pi)=\{x \mid x\text{~is a cycle double descent of $\pi$}\},$$
$$\ms_{n,i,j,k}^1=\{\pi\in\msn: \cda(\pi)=0,~\fix(\pi)=i,~\exc(\pi)=j,~\cyc(\pi)=k\},$$
$${\ms}_{n,i,j,k}^2=\{\pi\in\msn: \cda(\pi)=1,~\fix(\pi)=i,~\exc(\pi)=j,~\cyc(\pi)=k\}.$$
For $\pi\in\ms_{n,i,j,k}^1$ and $x\in \CDD(\pi)$, it should be noted that $\exc(\pi)$ equals the number of cycle peaks of $\pi$, $\varphi_{x}'(\pi)\in{\ms}_{n,i,j+1,k}^2$ and $x$ is the unique cycle double ascent of $\varphi_{x}'(\pi)$.
Conversely, for $\pi\in{\ms}_{n,i,j+1,k}^2$, let $x$ be the unique cycle double ascent of $\pi$.
Note that $\varphi_{x}'(\pi)\in \ms_{n,i,j,k}^1$ and $x$ becomes a cycle double descent in $\varphi'_{x}(\pi)$. This implies that $$|{\ms}_{n,i,j+1,k}^2|=(n-i-2j)|\ms_{n,i,j,k}^1|.$$

\begin{example}
Let $\pi=(1,10,6,5,7,3,2,8)(4,9)\in\mathfrak{S}_{10,0,4,2}^1$. We have $\CDD(\pi)=\{3,6\}$. Then
\begin{align*}
\varphi_{3}'(\pi)&=(1,3,10,6,5,7,2,8)(4,9),~
\varphi_{6}'(\pi)=(1,6,10,5,7,3,2,8)(4,9),
\end{align*}
and $\varphi_{3}'(\pi),~\varphi_{6}'(\pi)\in\mathfrak{S}_{10,0,5,2}^2$.
\end{example}

\noindent{\bf A proof of~\eqref{bnij-combin}:}
\begin{proof}
In order to get a permutation counted by $b_{n+1,i,j}(p)$, we distinguish five cases:
\begin{enumerate}
\item [($c_1$)] If $\pi\in \ms_{n,i-1,j}$, then we need append $(n+1)$ to $\pi$ as a new cycle. This accounts for the term $pb_{n,i-1,j}(p)$;
\item [($c_2$)] If $\pi\in \ms_{n,i+1,j-1}$, then we should insert the entry $n+1$ right after a fixed point. This accounts for the term $(1+i)b_{n,i+1,j-1}(p)$;
 \item [($c_3$)] If $\pi\in \ms_{n,i,j}$, then we should insert the entry $n+1$ right after an excedance.
This accounts for the term $jb_{n,i,j}(p)$;
 \item [($c_4$)] If $\pi\in \ms_{n,i,j-1,k}^1$, then there are $n-i-2(j-1)$ positions could be inserted the entry $n+1$, since
we cannot insert the entry $n+1$ immediately before or right after each cycle peak. Moreover, we cannot insert the entry $n+1$ right after a fixed point.
This accounts for the term $(n-i-2j+2)b_{n,i,j-1}(p)$;
 \item [($c_5$)]  If $\pi\in \ms_{n,i,j-1,k}^1$, let $x$ be one cycle double descent of $\pi$. Note that
$\varphi'_{x}(\pi)\in \ms_{n,i,j,k}^2$ and $x$ become the unique cycle double ascent. We should insert the entry $n+1$ into $\varphi'_{x}(\pi)$ immediately before $x$.
This accounts for the term $(n-i-2j+2)b_{n,i,j-1}(p)$.
 \end{enumerate}
Thus the claim~\eqref{bnij-combin} holds. This completes the proof.
\end{proof}

\section{Proof of Theorem~\ref{Cxzthm}}\label{Section04}
\hspace*{\parindent}
Consider the following grammar
\begin{equation}\label{GrammaG4}
G_4=\{a\rightarrow qat,t\rightarrow 2u, u\rightarrow uv,v\rightarrow 2u\}.
\end{equation}
Then when $q=1$ and $a=I$, then the grammar $G_4$ reduces to $G_1$, which is defined by~\eqref{JAcobi-gram02}.
\begin{lemma}\label{LemmaGrammaG4}
For the grammar $G_4$ given by~\eqref{GrammaG4}, we have
$$\gen(a;z)=aM^q\left(\frac{t}{v},\frac{2u}{v^2};vz\right),$$
where $M(s,x;z)$ is given by~\eqref{Msxz-explicit}.
\end{lemma}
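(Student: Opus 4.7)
My plan is to exploit the fact that the sub-grammar $\{t\to 2u,\ u\to uv,\ v\to 2u\}$ inside $G_4$ is identical to the sub-grammar of $G_1$ used in the proof of Theorem~\ref{mainthm01}, so the case $q=1$ of Lemma~\ref{LemmaGrammaG4} reduces (up to a relabelling $a\leftrightarrow I$) to a known identity, and then a short ODE-in-$z$ argument bootstraps the result to arbitrary $q$.

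First I would reduce to the case $q=1$. In $G_4$, the only rule that touches $a$ or $q$ is $a\to qat$; the remaining rules on $\{t,u,v\}$ do not mention either symbol, so $T(z):=\gen(t;z)$ is independent of $a$ and $q$. Setting $F(z):=\gen(a;z)$ and differentiating termwise,
$$\partial_z F(z)=\gen(D_{G_4}(a);z)=\gen(qat;z)=q\,\gen(a;z)\,\gen(t;z)=qF(z)T(z),$$
since $q$ is a constant for $D_{G_4}$. With $F(0)=a$ this integrates to $F(z)=a\exp\!\bigl(q\!\int_0^z T(w)\,dw\bigr)$, so it suffices to show that
$$\exp\!\Bigl(\int_0^z T(w)\,dw\Bigr)=M\!\left(\tfrac{t}{v},\tfrac{2u}{v^2};vz\right).$$

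Next I would pin this identity down by specialising to $q=1$. Under $q=1$ the grammar $G_4$ coincides with $G_1$ after renaming $a$ as $I$, so equation~\eqref{DG101} from part (B) of the proof of Theorem~\ref{mainthm01} yields
$$\gen(a;z)\big|_{q=1}=a\sum_{n\ge 0}\sum_{i=0}^{n}\sum_{j=0}^{\lfloor(n-i)/2\rfloor}2^j\gamma_{n,i,j}\,t^{i}u^{j}v^{n-i-2j}\,\frac{z^n}{n!}.$$
On the other side, substituting $s=t/v$, $x=2u/v^2$, $z\mapsto vz$ into the definition of $M(s,x;z)$ produces the same series, since the factors balance as $(t/v)^{i}(2u/v^2)^{j}(vz)^{n}=2^{j}\,t^{i}u^{j}v^{n-i-2j}\,z^{n}$. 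Thus $F(z)\big|_{q=1}=aM(t/v,2u/v^2;vz)$, which identifies $\exp\!\bigl(\int_0^z T(w)\,dw\bigr)$ with $M(t/v,2u/v^2;vz)$. Inserting this into the formula $F(z)=a\exp\!\bigl(q\!\int_0^z T(w)\,dw\bigr)$ gives $\gen(a;z)=aM^{q}(t/v,2u/v^2;vz)$, as claimed.

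I do not foresee any real obstacle. The substantive content is already packaged in~\eqref{DG101}; the only bookkeeping is verifying that the powers of $v$ in the substitution $(s,x,z)\mapsto(t/v,2u/v^2,vz)$ balance correctly and account for the $2^j$ present in~\eqref{DG101}, after which the extension from $q=1$ to general $q$ is a one-line exponentiation justified by the separation of variables in the ODE $\partial_z F=qFT$.
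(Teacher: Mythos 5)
Your argument is correct and follows essentially the same route as the paper: both derive the ODE $\partial_z\gen(a;z)=q\,\gen(a;z)\gen(t;z)$ via Leibniz's rule, identify the $q=1$ case with the grammar $G_1$ so that~\eqref{DG101} gives $\gen(I;z)=IM\bigl(\tfrac{t}{v},\tfrac{2u}{v^2};vz\bigr)$, and then integrate the separable relation to raise that solution to the $q$th power. Your explicit power-counting check of the substitution $(s,x,z)\mapsto(t/v,2u/v^2,vz)$ is a detail the paper leaves implicit, but the substance of the proof is identical.
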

\begin{proof}
It follows from~\eqref{JAcobi-gram02} and~\eqref{DG101} that
\begin{equation}\label{DG10123}
\frac{\partial}{\partial z}\gen(I;z)=\gen(I;z)\gen(t;z).
\end{equation}
\begin{equation*}\label{DG1012}
\gen(I;z)=IM\left(\frac{t}{v},\frac{2u}{v^2};vz\right),
\end{equation*}
By the {\it Leibniz's rule}, we find that
$$D_{G_4}^{n+1}(a)=qD_{G_4}^n(at)=q\sum_{i=0}^n\binom{n}{i}D_{G_4}^i(a)D_{G_4}^{n-i}(t).$$
Multiplying both sides by $z^n/n!$ and summing over all $n\geq 0$, we get
$$\frac{\partial}{\partial z}\gen(a;z)=q\gen(a;z)\gen(t;z).$$
Combining this with~\eqref{DG10123}, we obtain
\begin{equation}\label{DG14}
\frac{\frac{\partial}{\partial z}\gen(a;z)}{\gen(a;z)}=q\frac{\frac{\partial}{\partial z}\gen(I;z)}{\gen(I;z)}.
\end{equation}
Integrating both sides with respect to $z$ leads to the desired result.
\end{proof}

\noindent{\bf A proof of Theorem~\ref{Cxzthm}:}
\begin{proof}
When $p=1$ and $q=0$, the grammar $G_3$ given by~\eqref{Change-Bn} reduces to the following grammar
\begin{equation*}\label{BnGrammar}
G_5=\{J\rightarrow Jh,h\rightarrow u, u\rightarrow uv,v\rightarrow 2u\}.
\end{equation*}
It follows from~\eqref{DG3I} that
\begin{equation*}\label{DG4I}
D_{G_5}^n(J)=J\sum_{i=0}^nh^i\sum_{j=0}^{\lrf{(n-i)/2}}b_{n,i,j}u^jv^{n-i-2j}.
\end{equation*}
Upon taking $a=J,~q=\frac{1}{2}$ and $h=\frac{t}{2}$, the grammar $G_4$ reduces to $G_5$.
By using Lemma~\ref{LemmaGrammaG4}, we obtain that
\begin{equation}\label{DG4I02}
\gen(J;z)=J\sqrt{M\left(\frac{2h}{v},\frac{2u}{v^2};vz\right)}.
\end{equation}
Setting $v=1,h=s$ and $u=x$ in~\eqref{DG4I02}, we get
$$C(s,x;z)=\sqrt{M\left(2s,2x;z\right)},$$ as desired. This completes the proof.
\end{proof}

\section{Proof of Theorem~\ref{theoremdnbx}}\label{Section05}
We say that $i$ is an {\it weak anti-excedance} of $\sigma$ if $\sigma(|\sigma(i)|)<\sigma(i)$ or $\sigma(i)=\overline{i}$.
Let $\waexc(\sigma)$ be the number of weak anti-excedances of $\sigma$. Then $\waexc(\sigma)=\aexc(\sigma)+\st(\sigma)$.
Consider the following polynomials
$$B_n(x,y,s,q)=\sum_{\sigma\in B_n}x^{\exc(\sigma)}y^{\waexc(\sigma)}s^{\fix(\sigma)}q^{N(\sigma)}.$$
Then $B_n(x,y,s,q)=B_n(x,y,s,y,1,q)$.
When $t=y$ and $p=1$, the grammar~\ref{Ixyst-G} reduces to the following gramar
\begin{equation}\label{G6}
G_6=\{J\rightarrow J(s+qy),s\rightarrow (1+q)xy,x\rightarrow (1+q)xy,y\rightarrow (1+q)xy\}.
\end{equation}
By Lemma~\ref{derangment-grammar022}, we see that
$D_{G_6}^n=JB_n(x,y,s,q)$.
Setting $y=1$ and $s=0$, we obtain
\begin{equation}\label{Dg6yx}
D_{G_6}^n\mid_{y=1,s=0}=Jd_n^B(x,q).
\end{equation}

\noindent{\bf A proof of Theorem~\ref{theoremdnbx}:}
\begin{proof}
Consider a change of the grammar~\eqref{G6}. Setting $H=Jy,~u=xy,~v=x+y$, we see that
$$D_{G_6}(J)=Js+qH,~D_{G_6}(H)=Hs+qHv+Ju,~D_{G_6}(u)=(1+q)uv,~D_{G_6}(v)=2(1+q)u.$$
We now consider the following grammar
\begin{equation*}\label{G7}
G_7=\{J\rightarrow Js+qH,s\rightarrow (1+q)u,~H\rightarrow Hs+qHv+Ju,u\rightarrow (1+q)uv,v\rightarrow 2(1+q)u\}.
\end{equation*}
Note that
$$D_{G_7}^0(J)=J,
~D_{G_7}(J)=Js+qH,~
D_{G_7}^2(J)=J(s^2+(1+2q)u)+H(2qs+q^2v).$$
Then by induction, it is easy to verify that there are polynomials $f_{n,i,j}^+(q)$ and $f_{n,i,j}^-(q)$ such that
\begin{equation}\label{DG7uv}
D_{G_7}^n(J)=J\sum_{i=0}^ns^i\sum_{j=0}^{\lrf{(n-i)/2}}f_{n,i,j}^+(q)u^jv^{n-i-2j}+H\sum_{i=0}^{n-1}s^i\sum_{j=0}^{\lrf{(n-1-i)/2}}f_{n,i,j}^-(q)u^jv^{n-1-i-2j}.
\end{equation}
Applying the operator $D_{G_7}$ to both sides of the above expansion and setting $s=0$, we obtain
\begin{align*}
&D_{G_7}^{n+1}(J)\mid_{s=0}\\
&=D_{G_7}\left(D_{G_7}^{n}(J)\right)\mid_{s=0}\\
&=qH\sum_{j}f_{n,0,j}^+(q)u^jv^{n-2j}+(1+q)J\sum_{j}f_{n,1,j}^+(q)u^{j+1}v^{n-1-2j}+\\
&(1+q)J\sum_{j}jf_{n,0,j}^+(q)u^jv^{n+1-2j}+2(1+q)J\sum_{j}(n-2j)f_{n,0,j}^+(q)u^{j+1}v^{n-1-2j}+\\
&(qHv+Ju)\sum_{j=0}^{\lrf{(n-1)/2}}f_{n,0,j}^-(q)u^jv^{n-1-2j}+(1+q)H\sum_{j}f_{n,1,j}^-(q)u^{j+1}v^{n-2-2j}+\\
&(1+q)H\sum_{j}jf_{n,0,j}^-(q)u^jv^{n-2j}+2(1+q)H\sum_{j}(n-1-2j)f_{n,0,j}^-(q)u^{j+1}v^{n-2-2j}.
\end{align*}

Set $f_{n,j}^+(q)=f_{n,0,j}^+(q)$ and $f_{n,j}^-(q)=f_{n,0,j}^-(q)$.
Since $s$ marks fixed points, we have $$f_{n,1,j}^+(q)=nf_{n-1,0,j}^+(q)=nf_{n-1,j}^+(q),$$
$$f_{n,1,j}^-(q)=nf_{n-1,0,j}^-(q)=nf_{n-1,j}^-(q).$$
Comparing the coefficients of $Ju^jv^{n+1-2j}$ and $Hu^jv^{n-2i}$ on both sides of
$$D_{G_7}^{n+1}(J)\mid_{s=0}=D_{G_7}\left(D_{G_7}^{n}(J)\right)\mid_{s=0},$$
and simplifying yields
the following recurrence system:
\begin{align*}
f_{n+1,j}^+(q)&=(1+q)nf_{n-1,j-1}^+(q)+(1+q)jf_{n,j}^+(q)+2(1+q)(n-2j+2)f_{n,j-1}^+(q)+\\
&f_{n,j-1}^-(q),\\
f_{n+1,j}^-(q)&=qf_{n,j}^+(q)+qf_{n,j}^-(q)+(1+q)nf_{n-1,j-1}^-(q)+(1+q)jf_{n,j}^-(q)+\\
&2(1+q)(n-2j+1)f_{n,j-1}^-(q).
\end{align*}
Let
\begin{equation*}
f_{n}^{+}(x,q)=\sum_{j=0}^{\lrf{n/2}}f_{n,j}^+(q)x^j(1+x)^{n-2j},$$
$$f_n^-(x,q)=\sum_{j=0}^{\lrf{(n-1)/2}}f_{n,j}^-(q)x^j(1+x)^{n-1-2j}.
\end{equation*}
Comparing~\eqref{Dg6yx} and~\eqref{DG7uv}, and takeing $s=0,u=x$ and $v=1+x$ in~\eqref{DG7uv}, we immediately get~\eqref{fnbigamma}.
It is routine to deduce the recurrence system of the polynomials $f_{n}^{+}(x,q)$ and $f_n^-(x,q)$.
When $q\geq 0$, since the $\gamma$-coefficients $f_{n,i}^+(q)$ and $f_{n,i}^-(q)$ are both nonnegative,
so $f_{n}^{+}(x,q)$ and $f_{n}^{-}(x,q)$ are both $\gamma$-positive.
\end{proof}

\end{document}